\documentclass[review,onefignum,onetabnum]{siamart171218}



\usepackage{lipsum}
\usepackage{amsfonts}
\usepackage{graphicx}
\usepackage{epstopdf}
\usepackage{algorithmic}
\ifpdf
  \DeclareGraphicsExtensions{.eps,.pdf,.png,.jpg}
\else
  \DeclareGraphicsExtensions{.eps}
\fi


\newsiamremark{remark}{Remark}
\newsiamremark{hypothesis}{Hypothesis}
\crefname{hypothesis}{Hypothesis}{Hypotheses}
\newsiamthm{claim}{Claim}


\title{A Single-Mode Quasi Riemannian Gradient Descent Algorithm for Low-Rank Tensor Recovery\thanks{Submitted to the editors DATE.
}}

\author{Yuanwei Zhang
\thanks{School of Mathematical Sciences, Shanghai Jiao Tong University, Shanghai 200240, CHINA
  (\email{sjtuzyw@sjtu.edu.cn}).}
\and YA-NAN ZHU
\and Xiaoqun Zhang\thanks{School of Mathematical Sciences and Institute of Natural Sciences, Shanghai Jiao Tong University, Shanghai 200240, CHINA
  (\email{xqzhang@sjtu.edu.cn}).}}

\usepackage{amsopn}

\makeatletter
\newcommand*{\addFileDependency}[1]{
  \typeout{(#1)}
  \@addtofilelist{#1}
  \IfFileExists{#1}{}{\typeout{No file #1.}}
}
\makeatother


\ifpdf
\hypersetup{
  pdftitle={A Single-Mode Quasi Riemannian Gradient Descent Algorithm for Low-Rank Tensor Recovery},
  pdfauthor={ }
}
\fi




\usepackage{amsmath}
\usepackage{amsfonts}
\usepackage{mathrsfs}
\usepackage{color}
\usepackage{subfigure}
\usepackage{graphicx}
\usepackage{algorithm}
\usepackage{algorithmic}

\newcommand{\argmin}{\operatorname{argmin}}
\newcommand{\rank}{\operatorname{rank}}

\newcommand{\mcal}{\mathcal}
\newcommand{\scr}{\mathscr}
\newcommand{\bb}{\mathbb}
\newcommand{\bd}{\boldsymbol}
\newcommand{\mbf}{\mathbf}
\newcommand{\bnm}{\big\|}

\begin{document}

\maketitle
\begin{abstract}
This paper focuses on recovering a low-rank tensor from its incomplete measurements. We propose a novel algorithm termed the Single Mode Quasi Riemannian Gradient Descent (SM-QRGD). By exploiting the benefits of both fixed-rank matrix tangent space projection in Riemannian gradient descent and sequentially truncated high-order singular value decomposition (ST-HOSVD), SM-QRGD achieves a much faster convergence speed than existing state-of-the-art algorithms. Theoretically, we establish the convergence of SM-QRGD through the Tensor Restricted Isometry Property (TRIP) and the geometry of the fixed-rank matrix manifold. Numerically, extensive experiments are conducted, affirming the accuracy and efficacy of the proposed algorithm.


\end{abstract}

\begin{keywords}
  low-rank tensor recovery, Riemannian manifold, iterative hard thresholding
\end{keywords}

\begin{AMS}

\end{AMS}

\section{Introduction}
Tensor is an extension of matrix into multiple dimensions, offering significant utility in data analysis across diverse domains such as computer vision \cite{li2010tensor, liu2012tensor}, machine learning \cite{anandkumar2014tensor}, signal processing \cite{nion2010tensor, cichocki2015tensor}, bioinformatics \cite{troyanskaya2001missing}, and quantum state tomography \cite{gross2010quantum}. This paper focuses on the tensor recovery problem, which aims to reconstruct an unknown tensor from its highly incomplete measurements.
To be concrete, for a given linear measurement operator $\scr{A}$ with the measurement $\mbf{y}\in \bb{R}^{m}$ of an unknown tensor $\bd{\mcal{T}} \in \bb{R}^{n_1 \times n_2 \times \cdots \times n_d}$, we aim to reconstruct $\bd{\mcal{T}}$ by solving the following optimization problem 
\begin{equation}\label{model: tensor recovery}
    \min_{{\mbf{\mcal{X}}}\in \bb{R}^{n_1\times \dots \times n_d}}\ \frac{1}{2}\bnm\scr{A}(\bd{\mcal{X}}) - \mbf{y} \bnm_2^2.
\end{equation}

In various applications, the dimension $m$ is considerably smaller than $\prod_{i = 1}^d n_i$, making \eqref{model: tensor recovery} an ill-posed problem. Usually, there is no unique solution if no additional prior knowledge is imposed. Analogous to the matrix case, one commonly adopted prior knowledge is the low rankness of $\bd{\mcal{T}}$. 
However, unlike the matrix, the rank of a given tensor is not unique. It often varies depending on the specific applications, for example, CANDECOMP/PARAFAC (CP) decomposition \cite{harshman1970foundations, carroll1970analysis}, Tucker decomposition \cite{tucker1966some} and tensor train (TT) decomposition \cite{oseledets2011tensor}. In this study, we focus on the Tucker decomposition and assume that the target tensor $\bd{\mcal{T}}$ possesses a low-multilinear-rank property, e.g., $\operatorname{rank}_T(\bd{\mcal{T}}) = \mathbf{r} = (r_1,\dots, r_d)$:
\begin{equation}\label{equ: low-rank assupmtion}
\bd{\mcal{T}} = \bd{\mcal{C}}\times_1\mbf{U}_1\times\cdots\times_d \mbf{U}_d,
\end{equation}
where $\bd{\mcal{C}}\in \bb{R}^{r_1\times \dots\times r_d}$ is the core tensor, $\mbf{U}_k$ is an $n_k$ by $r_k$ matrix with orthonormal columns, $\times_k$ is the tensor-matrix product along mode-$k$ (Refer to section \ref{subsection: Tensor operation} for more details).

In the context of operator $\scr{A}$, different selections of $\scr{A}$ are specifically designed to address different applications. These applications cover a broad spectrum of fields, from 3D medical imaging \cite{zhou2013tensor} to video sequences \cite{miao2020low} and recommendation systems \cite{bi2018multilayer}.
For instance, one commonly encountered challenge is the tensor completion problem. In such scenarios, the operator $\scr{A}$ acts as the sampling operator \cite{fornasier2011low, rauhut2017low}. Mathematically, the objective is to reconstruct a low-multilinear-rank tensor $\bd{\mcal{T}}$ from partially observed entries given by $\bd{\mcal{T}}(i_1, \dots, i_d)$, where the indices $(i_1, \dots, i_d)$ belong to a set $\Omega$. The tensor recovery problem with low-multilinear-rank prior can be formulated as the following optimization problem:
\begin{equation}\label{model: low-rank tensor recovery}
    \begin{aligned}
    \min_{{\mbf{\mcal{X}}}\in \bb{R}^{n_1\times \dots \times n_d}}
    &\ \frac{1}{2}\bnm\scr{A}(\bd{\mcal{X}}) - \mbf{y} \bnm_2^2\\
    \text{s.t.}\ &\operatorname{rank}_T(\bd{\mcal{X}}) = \mbf{r}.
    \end{aligned}
\end{equation}

Various methods have been introduced to address the problem presented in \eqref{model: low-rank tensor recovery}. Convex approaches, such as nuclear norm minimization, have been proven effective in low-rank matrix recovery.  In the realm of tensor recovery, similar approaches have been explored, including the strategy of minimizing the sum of nuclear norms (SNN) for unfolded matrices, as exemplified in \cite{gandy2011tensor, huang2014provable,huang2015provable,yuan2016tensor}.
However, the methods based on unfolding do not fully leverage the inherent tensor structure, which results in suboptimal sample complexity. Furthermore, tensor nuclear norm minimization is computationally demanding, especially given the NP-hard nature of evaluating tensor norms \cite{hillar2013most}.
Non-convex approaches, on the other hand, have gained increasing prominence, primarily due to their enhanced computational efficiency and better performance in terms of sampling complexity. These methods, based on the factorized form of tensor, apply alternating minimization \cite{zhou2013tensor, jain2014provable, xia2021statistically} and gradient descent \cite{xia2019polynomial, tong2022scaling, han2022optimal} to the factor matrices and core tensor.
Another line of non-convex approaches directly solves \eqref{model: low-rank tensor recovery} by using iterative hard thresholding (IHT) (or projected gradient descent) \cite{rauhut2017low, de2017low, chen2019non, ahmed2020tensor} or Riemannian optimization algorithms \cite{kressner2014low, cai2020provable, luo2021low, wang2021implicit}.
For example, \cite{rauhut2017low} first proposed the tensor iterative hard thresholding (TIHT) algorithm and provided the recovery guarantees based on tensor restricted isometry property (TRIP). However, the thresholding operator, truncated high order singular value decomposition (T-HOSVD), involves $d$ times SVD computation on the unfolded matrices. To alleviate the computational overload, \cite{de2017low} proposed an algorithm called sequentially optimal modal projection iterative hard thresholding (SeMPIHT). The approach reduces the computational complexity by sequentially decreasing the dimension of each mode through the application of sequentially truncated high-order singular value decomposition (ST-HOSVD) \cite{vannieuwenhoven2012new}. 
The Riemannian Gradient Descent (RGD) addresses \eqref{model: low-rank tensor recovery} from the perspective of Riemannian optimization. In the framework of IHT, RGD introduces an additional tangent space projection before the thresholding operation. This extra step mitigates the computational expense of the ensuing thresholding process. Nevertheless, the tangent space projection for a low-multilinear-rank tensor manifold necessitates the computation of $d + 1$ orthogonal components. Notably, the computational complexity of this process is in the same order as the direct thresholding operations.

In this work, we propose a new algorithm, which we call SM-QRGD, that takes advantage of the sequential thresholding operation of SeMPIHT and the tangent space projection of RGD so that a faster convergence can be achieved for solving the tensor recovery problem \eqref{model: low-rank tensor recovery}. 
In contrast to the existing Riemannian optimization literature, which treats the low-multilinear-rank tensor set as a Riemannian manifold, our innovation lies in viewing the modal truncated SVD as a retraction onto a manifold of low-rank matrices. By harnessing matrix tangent space techniques, we efficiently diminish the computational cost of the first-mode matrix truncated SVD in the SeMPIHT algorithm. Theoretically, we establish the convergence of SM-QRGD under the classical assumptions, i.e., TRIP of the operator $\scr{A}$. Numerical experiments in comparison with the TIHT algorithm \cite{rauhut2017low}, the SeMPIHT algorithm \cite{de2017low}, and the RGD algorithm \cite{cai2020provable} also validate the correctness and effectiveness of the method.

\textbf{Organizations.}
This paper is organized as follows. In Section \ref{section: Preliminary}, we introduce the notations, tensor operations, and associated algorithms. In Section \ref{section: Algorithm and Main Results}, we present our proposed algorithm and its convergence results. The numerical results are provided in Section \ref{section: numerical experiments}. We give our conclusion and future direction in Section \ref{section: Discussion and Future Direction}. Some useful lemmas and proofs of convergence are collected in Section \ref{section: Proof}.

\section{Preliminary}\label{section: Preliminary}
\subsection{Notations and Tensor Operations}\label{subsection: Tensor operation}
This paper uses capital calligraphic letters to represent tensors, capital letters for matrices, and lowercase letters for vectors. For instance, we denote a real $d$-order tensor as $\bd{\mcal{X}}\in \bb{R}^{n_1\times \dots \times n_d}$, a real matrix of dimensions $m\times n$ as $\mbf{X}\in \bb{R}^{m\times n}$, and a real vector with a length of $n$ as $\mbf{x}\in \bb{R}^n$. The entry in the $(j_1, \dots, j_d)$-th position of tensor $\bd{\mcal{X}}$ is represented as $\bd{\mcal{X}}_{j_1\dots j_d}$.

In the sequel, following the terminology of \cite{kolda2009tensor}, we briefly introduce the following basic tensor operations: 

\begin{itemize}
    \item \textit{Tensor matricization.}
    Tensor matricization is the process of converting a tensor into a matrix form. For a $d$-order tensor $\bd{\mcal{X}}\in \bb{R}^{n_1\times \dots\times n_d}$, we denote the mode-$k$ matricization operator as $\scr{M}_k$, and $\scr{M}_k(\bd{\mcal{X}})$ is a matrix of size $n_k\times \prod_{j = 1, j\neq k}^d n_j$.
    The element at position $(i_1, \dots, i_d)$ in the tensor corresponds to the entry at position $(i_k, j)$ in the matrix $\scr{M}_k(\bd{\mcal{X}})$, with
    \begin{equation*}
    j = 1 + \sum\limits_{\substack{l=1,\\l\neq k}}^{d}(i_l - 1)J_l\quad \text{with}\quad J_l = \prod_{\substack{m=1,\\m\neq k}}^{k-1}n_m.
    \end{equation*}
    
    \item \textit{Mode-i tensor-matrix product.} 
    We denote the mode-$i$ product of a tensor $\bd{\mcal{X}}\in \bb{R}^{n_1\times n_2\cdots\times n_d}$ with a matrix $\mbf{U}\in \bb{R}^{m\times n_i}$ as $\bd{\mcal{X}}\times_i \mbf{U}$. 
    The product $\bd{\mcal{X}}\times_i \mbf{U}\in\bb{R}^{n_1\times \cdots \times n_{i-1}\times m\times n_{i+1}\times \cdots\times n_d}$ is elementwise calculated by
    \begin{equation*}
    (\bd{\mcal{X}}\times_i \mbf{U})_{j_1\cdots j_{i-1} k j_{i+1}\cdots j_d} = \sum_{l = 1}^{n_i} \bd{\mcal{X}}_{j_1\cdots j_{i-1} l j_{i+1}\cdots j_d} \mbf{U}_{k l}.
    \end{equation*}
    It can also be expressed in terms of unfolded tensors as follows
    \begin{equation*}
    \bd{\mcal{Y}} = \bd{\mcal{X}}\times_i \mbf{U}\Longleftrightarrow \scr{M}_i (\bd{\mcal{Y}}) = \mbf{U}\scr{M}_i (\bd{\mcal{X}}).
    \end{equation*}
    
    \item \textit{Inner product and norm.}
     For two tensors $\bd{\mcal{X}},\bd{\mcal{Y}} \in\bb{R}^{n_1\times n_2\times \cdots\times n_d}$, the inner product is calculated as the sum of products of their corresponding entries, that is,
    \begin{equation*}
    \langle\bd{\mcal{X}} ,\bd{\mcal{Y}} \rangle = \sum_{j_1=1}^{n_1}\cdots \sum_{j_d=1}^{n_d} x_{j_1\cdots j_d} y_{j_1\dots j_d}.
    \end{equation*}
    The induced norm for given tensor $\bd{\mcal{Z}}$ is expressed by $\bnm\bd{\mcal{Z}}\bnm_F = \sqrt{\langle\bd{\mcal{Z}}, \bd{\mcal{Z}}\rangle}$.
    \item \textit{Multilinear rank and Tucker decomposition.} The multilinear rank  of a tensor $\bd{\mcal{X}}\in \bb{R}^{n_1\times \dots\times n_d}$ is a length-$d$ vector $\rank_T(\bd{\mcal{X}}) = (r_1,\dots, r_d)$, where
    $r_i = \rank(\scr{M}_i(\bd{\mcal{X}})), i = 1,\dots, d$. If the multilinear rank of $\bd{\mcal{X}}$ is $\mbf{r}$, we denote $\mbf{U}_i\in \bb{R}^{n_i\times r_i}$ as the orthogonal matrix which spans the column space of $\scr{M}_i(\bd{\mcal{X}})$ for $i = 1,\dots, d$, respectively. Then the Tucker decomposition of $\bd{\mcal{X}}$ is 
    \begin{equation*}
    \bd{\mcal{X}} = \bd{\mcal{C}}\times_1 \mbf{U}_1\times_2 \cdots\times_d \mbf{U}_d,
    \end{equation*}
    where $\bd{\mcal{C}}\in \bb{R}^{r_1\times\dots\times r_d}$ is called the core tensor and $\mbf{U}_i, i= 1, \dots, d$ are called the factor matrices.
\end{itemize}

\subsection{Related Algorithms}\label{subsection : related algorithms}

A natural approach to solving \eqref{model: low-rank tensor recovery} is to first perform one step of gradient descent on $\bd{\mcal{X}}^{k}$ concerning the objective function, followed by projecting onto the low-multilinear-rank set $\Theta$. i.e.,
\begin{equation}\label{equ: iht step}
\bd{\mcal{X}}^{k+1} = \scr{P}_{\Theta}(\bd{\mcal{X}}^k - \alpha_k \scr{A}^* (\scr{A}\bd{\mcal{X}}^k - \mbf{y})),
\end{equation}
where $k$ is the iteration counter, $\alpha_k$ is the step size at the $k$-th iteration, and $\scr{P}_{\Theta}$ is the projection operator onto the low-multilinear-rank tensor set $\Theta$. However, unlike in the matrix cases where the projection can be analytically calculated using truncated SVD, there is generally no efficient way to compute the projection on $\Theta$. Hence, in practice, its approximation often replaces $\scr{P}_{\Theta}$. There are two popular projection operators: T-HOSVD $\scr{H}_{\mbf{r}}$ \cite{tucker1966some, de2000multilinear} and ST-HOSVD $\scr{H}_{\mbf{r}}^{\text{ST}}$ \cite{vannieuwenhoven2012new}, both of which satisfy the following quasi-projection property.

\begin{definition}[Quasi-projection property of low-multilinear-rank tensor map $\hat{\scr{P}}_{\Theta}$]

Denote $\scr{P}_{\Theta}$ as the projection to the low-multilinear-rank tensor set $\Theta$, i.e., for any $\bd{\mcal{Z}}\in \bb{R}^{n_1\times\dots,\times n_d}$ and tensor $\hat{\bd{\mcal{Z}}}$ with $\rank_{T}(\bd{\mcal{Z}})\leq \mathbf{r}$, $\bnm\bd{\mcal{Z}} - \scr{P}_\Theta (\bd{\mcal{Z}})\bnm_F \leq \bnm\bd{\mcal{Z}} - \hat{\bd{\mcal{Z}}}\bnm_F$. The map $\hat{\scr{P}}_{\Theta}$ satisfies the quasi-projection property with constant $\delta>0$ if for any $\bd{\mcal{Z}}\in \bb{R}^{n_1\times \dots \times n_d}$
\begin{equation}\label{equ: HOSVD error}
    \bnm\bd{\mcal{Z}} - \hat{\scr{P}}_{\Theta}(\bd{\mcal{Z}})\bnm_F\leq\delta\cdot \bnm\bd{\mcal{Z}} - \scr{P}_{\Theta}(\bd{\mcal{Z}})\bnm_F.
\end{equation}
\end{definition}

The T-HOSVD algorithm naturally extends the truncated SVD of a matrix to a higher-order tensor. The computing procedure of T-HOSVD is presented in Algorithm \ref{alg: T-HOSVD}.

\begin{center}    
\begin{algorithm}[H]
\caption{T-HOSVD \cite{tucker1966some, de2000multilinear} }
\begin{algorithmic}
\REQUIRE Tensor $\bd{\mcal{Z}} \in \bb{R}^{n_1 \times n_2 \times \cdots \times n_d}$, truncation $\mathbf{r} = \left(r_1, r_2, \cdots, r_d\right)$.\\
\FOR{$i = 1,\cdots, d$}
\STATE $ \mbf{U}_{i} \leftarrow r_i$ leading left singular vectors of $ \scr{M}_i(\bd{\mcal{Z}})$,
\ENDFOR
\STATE Core tensor $\bd{\mcal{C}} = \bd{\mcal{Z}}\times_1   \mbf{U}_{1}^T \times_2  \mbf{U}_{2}^T\times_3 \cdots \times_d  \mbf{U}_{d}^T$
\ENSURE Approximated projection $\scr{H}_{\mbf{r}}(\bd{\mcal{Z}}) =  \bd{\mcal{C}} \times_1  \mbf{U}_1 \times_2  \mbf{U}_2 \times_3 \cdots \times_d  \mbf{U}_d$.
\end{algorithmic} 
\label{alg: T-HOSVD}
\end{algorithm} 
\end{center}

The ST-HOSVD algorithm employs a Gauss-Seidel type truncation strategy, effectively reducing the computational complexity inherent in the T-HOSVD process.

\begin{center}    
\begin{algorithm}[H]
\caption{Sequentially Truncated HOSVD (ST-HOSVD)\cite{vannieuwenhoven2012new}}
\begin{algorithmic} 
\STATE \textbf{Input:} Tensor $\bd{\mcal{Z}} \in \bb{R}^{n_1 \times n_2 \times \cdots \times n_d}$, truncation $\mathbf{r} = \left(r_1, r_2, \cdots, r_d\right)$.\\
\STATE $\bd{\mcal{B}} = \bd{\mcal{Z}}$.
\FOR{$n \in\left\{1, \dots, d\right\}$}
\STATE $ \mbf{U}_{n},  \mbf{\Sigma}_{n},  \mbf{V}_{n}^T \leftarrow$ truncated-$r_n$ SVD of $ \scr{M}_{n}(\bd{\mcal{B}})$,
\STATE $\bd{\mcal{B}} = \scr{M}_n^{-1}( \mbf{\Sigma}_n  \mbf{V}_n^T)$
\ENDFOR
\STATE Core tensor $\bd{\mcal{C}} = \bd{\mcal{B}}$.
\ENSURE Approximated projection $\scr{H}_{\mbf{r}}^{\text{ST}}(\bd{\mcal{Z}}) = \bd{\mcal{C}}\times_1  \mbf{U}_{1} \times_2  \mbf{U}_{2}\times_3 \cdots \times_d  \mbf{U}_{d}$.
\end{algorithmic} 
\label{alg: ST-HOSVD}
\end{algorithm}
\end{center}

In Algorithm \ref{alg: ST-HOSVD}, the ST-HOSVD method calculates the truncated SVD of the intermediate matrix $\scr{M}_i(\bd{\mcal{B}})$ instead of $\scr{M}_i(\bd{\mcal{Z}})$ to obtain the factor matrix $\mbf{U}_i$. Subsequently, $\bd{\mcal{B}}$ is updated as $\bd{\mcal{B}}\times_i \mbf{U}_i^T$. This process reduces the dimensions of $\bd{\mcal{B}}$ across each mode, thereby diminishing the computational complexity required for the truncated SVD in subsequent modes. Furthermore, it is shown in \cite{xiao2021efficient} that ST-HOSVD can achieve minimal computational cost when the truncated SVD is performed sequentially, starting from modes with lower ranks and progressing towards those with higher ranks.
Regarding the approximation error, both T-HOSVD and ST-HOSVD satisfy the following quasi-projection property with the same approximation constant, as discussed in \cite{vannieuwenhoven2012new}.

\begin{proposition}[Quasi-projection property of T-HOSVD and ST-HOSVD]\label{prop: quasipro}
The T-HOSVD and ST-HOSVD methods in Algorithm \ref{alg: T-HOSVD} and \ref{alg: ST-HOSVD} satisfy the quasi-projection property with approximation constant $\sqrt{d}$.
\end{proposition}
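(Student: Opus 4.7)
The plan is to establish the bound via a mode-by-mode error decomposition that telescopes, combined with a Pythagorean identity that accounts for a factor of $\sqrt{d}$. Let $\bd{\mcal{Z}}^* = \scr{P}_\Theta(\bd{\mcal{Z}})$ denote the best low-multilinear-rank approximation, and write $\varepsilon := \|\bd{\mcal{Z}} - \bd{\mcal{Z}}^*\|_F$. Throughout, I would use that $\mbf{U}_i$ produced by either algorithm induces an orthogonal projector $\pi_i := \mbf{U}_i\mbf{U}_i^T$ acting in mode $i$, and that such mode-wise projectors acting in different modes commute.

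For T-HOSVD, I would define intermediate tensors $\bd{\mcal{Z}}_0 := \bd{\mcal{Z}}$ and $\bd{\mcal{Z}}_i := \bd{\mcal{Z}} \times_1 \pi_1 \times_2 \cdots \times_i \pi_i$, so that $\bd{\mcal{Z}}_d = \scr{H}_\mbf{r}(\bd{\mcal{Z}})$. Then
\begin{equation*}
\bd{\mcal{Z}} - \scr{H}_\mbf{r}(\bd{\mcal{Z}}) = \sum_{i=1}^d \bigl(\bd{\mcal{Z}}_{i-1} - \bd{\mcal{Z}}_i\bigr) = \sum_{i=1}^d \bd{\mcal{Z}}_{i-1} \times_i (I - \pi_i).
\end{equation*}
The key observation is that these $d$ summands are pairwise orthogonal in Frobenius inner product: for $i<j$, the $i$-th summand has mode-$i$ fibers in the range of $I-\pi_i$, while the $j$-th summand (which applies $\pi_i$ in mode $i$) has mode-$i$ fibers in the range of $\pi_i$. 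Hence by Pythagoras, $\|\bd{\mcal{Z}}-\scr{H}_\mbf{r}(\bd{\mcal{Z}})\|_F^2 = \sum_{i=1}^d \|\bd{\mcal{Z}}_{i-1}\times_i (I-\pi_i)\|_F^2$. Each summand equals $\|\scr{M}_i(\bd{\mcal{Z}}_{i-1}) - \pi_i \scr{M}_i(\bd{\mcal{Z}}_{i-1})\|_F^2$, which is the squared error of the best rank-$r_i$ matrix approximation. Since $\bd{\mcal{Z}}^*\times_1\pi_1\cdots\times_{i-1}\pi_{i-1}$ has mode-$i$ rank at most $r_i$, this error is at most $\|\scr{M}_i((\bd{\mcal{Z}}-\bd{\mcal{Z}}^*)\times_1\pi_1\cdots\times_{i-1}\pi_{i-1})\|_F^2 \le \varepsilon^2$, where the last inequality uses that applying orthogonal projectors is a contraction. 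Summing gives $\|\bd{\mcal{Z}} - \scr{H}_\mbf{r}(\bd{\mcal{Z}})\|_F^2 \le d\,\varepsilon^2$, i.e.\ $\delta=\sqrt{d}$.

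For ST-HOSVD, I would carry out the same argument with $\bd{\mcal{Z}}_i$ defined identically but using the projectors $\pi_i^{\text{ST}}$ extracted from $\scr{M}_i(\bd{\mcal{Z}}_{i-1})$ rather than $\scr{M}_i(\bd{\mcal{Z}})$. The telescoping identity and the cross-term orthogonality proceed verbatim, since they rely only on the fact that later summands carry the earlier projectors. The bound on each summand is even cleaner here: $\|\bd{\mcal{Z}}_{i-1}\times_i(I-\pi_i^{\text{ST}})\|_F$ is by construction the optimal rank-$r_i$ approximation error of $\scr{M}_i(\bd{\mcal{Z}}_{i-1})$, which I bound by comparing to the competitor $\bd{\mcal{Z}}^*\times_1\pi_1^{\text{ST}}\cdots\times_{i-1}\pi_{i-1}^{\text{ST}}$, whose mode-$i$ rank is at most $r_i$, to again obtain a bound of $\varepsilon$ per term.

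The step I expect to require the most care is the orthogonality argument that yields the Pythagorean decomposition, since it is easy to mishandle the commutation of mode-wise projectors and the tracking of which fibers lie in which subspace. Once that is set up correctly, the per-term bound is a direct application of the Eckart--Young theorem for matrices together with the observation that $\bd{\mcal{Z}}^*$ (or its suitably projected version) furnishes a rank-$r_i$ competitor in each mode.
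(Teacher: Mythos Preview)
The paper does not supply its own proof of this proposition; it simply attributes the result to \cite{vannieuwenhoven2012new}. Your telescoping-plus-Pythagoras argument is precisely the standard proof found there (and, for T-HOSVD, in De Lathauwer et al.), so in that sense your approach and the paper's implicit reference coincide.

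There is one slip in your T-HOSVD argument worth fixing. You assert that $\|\scr{M}_i(\bd{\mcal{Z}}_{i-1}) - \pi_i\,\scr{M}_i(\bd{\mcal{Z}}_{i-1})\|_F^2$ is ``the squared error of the best rank-$r_i$ matrix approximation'' and then invoke Eckart--Young against the competitor $\bd{\mcal{Z}}^*\times_1\pi_1\cdots\times_{i-1}\pi_{i-1}$. But in T-HOSVD the projector $\pi_i$ is built from $\scr{M}_i(\bd{\mcal{Z}})$, not from $\scr{M}_i(\bd{\mcal{Z}}_{i-1})$, so it is \emph{not} optimal for the latter and the Eckart--Young comparison as stated does not apply. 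The repair is immediate: since $\scr{M}_i(\bd{\mcal{Z}}_{i-1}) = \scr{M}_i(\bd{\mcal{Z}})\,Q_i$ for an orthogonal projector $Q_i$ (a Kronecker product of $\pi_1,\dots,\pi_{i-1}$ with identities), one has
\[
\|(I-\pi_i)\,\scr{M}_i(\bd{\mcal{Z}}_{i-1})\|_F \le \|(I-\pi_i)\,\scr{M}_i(\bd{\mcal{Z}})\|_F,
\]
and now Eckart--Young applied to $\scr{M}_i(\bd{\mcal{Z}})$ with competitor $\scr{M}_i(\bd{\mcal{Z}}^*)$ gives the bound $\varepsilon$. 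Your ST-HOSVD argument does not suffer from this issue, because there $\pi_i^{\text{ST}}$ is by construction optimal for $\scr{M}_i(\bd{\mcal{Z}}_{i-1})$.
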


By replacing  $\scr{P}_{\Theta}$ in \eqref{equ: iht step} by T-HOSVD and ST-HOSVD, \cite{rauhut2017low} and \cite{de2017low} proposed the TIHT algorithm and SeMPIHT algorithm. Concerning the selection of the step size $\alpha_k$, \cite{rauhut2017low} proposed two variants of IHT: the Constant step size IHT (CIHT, $\alpha_k = 1$) and the Normalized step size IHT (NIHT). The normalized step size is defined as follows:
\begin{equation}\label{equ: IHT stepsize}
\alpha_k = \argmin_{\alpha}\bnm\scr{A}\left(\bd{\mcal{X}}^k + \alpha \scr{F}^k(\bd{\mcal{G}}^k) - \bd{\mcal{T}}\right) \bnm_F^2 = \frac{\bnm\scr{F}^k(\bd{\mcal{G}}^k)\bnm_F^2}{\bnm\scr{A}(\scr{F}^k(\bd{\mcal{G}}^k))\bnm_F^2}.
\end{equation}
Here, $\bd{\mcal{G}}^k = \scr{A}^* \scr{A}(\bd{\mcal{T}} - \bd{\mcal{X}}^k)$ represents the negative gradient direction, and the operator $\scr{F}^k:\bb{R}^{n_1\times \dots\times n_d}\longrightarrow \bb{R}^{n_1\times \dots \times n_d}$ is defined as
\begin{equation*}
\scr{F}^k(\bd{\mcal{Z}}) := \bd{\mcal{Z}}\times_{i\in [d]} (\mbf{U}^k_i (\mbf{U}^k_i)^T),
\end{equation*}
with $\mbf{U}_i^k$ being the factor matrices of $\bd{\mcal{X}}^k$. Thus, $\scr{F}^k$ projects tensor $\bd{\mcal{Z}}$ onto the subspace of tensors whose column space of mode-$i$ matricization is spanned by $\mbf{U}_i^k$, for $i = 1,\dots, d$.

The RGD algorithm, on the other hand, treats the fixed multilinear rank tensor set as a Riemannian manifold and performs additional tangent space projection before T-HOSVD $\scr{H}_{\mbf{r}}$. Let $\bd{\mcal{X}}^{k} = \bd{\mcal{C}}^k\times_{i\in [d]} \mbf{U}^k_i$ be its multilinear factorization, the tangent space at $\bd{\mcal{X}}^{k}$ is defined as:
\begin{equation}\label{equ: tensor tangent space}
\bb{S}^k: = \left\{\bd{\mcal{Z}}\in \bb{R}^{n_1\times \dots\times n_d} :\bd{\mcal{Z}} = \bd{\mcal{D}}\times_{i\in [d]}\mbf{U}^k_i + \sum_{i = 1}^d \bd{\mcal{C}}^k\times_{j\in [d]\backslash i} \mbf{U}^k_j\times_i \mbf{V}_i \right\},
\end{equation}
here $\bd{\mcal{D}}\in\bb{R}^{r_1\times \dots\times r_d}$, $\mbf{V}_i\in \bb{R}^{n_i\times r_i}$, and the orthogonality condition $(\mbf{U}^k_i)^T\mbf{V}_i=0$ holds for $i = 1,\dots, d$. It is shown in \cite{kressner2014low, cai2020provable} that the tensors within $\bb{S}^k$ possess a maximum multilinear rank of $2\mbf{r}$, thereby it suffices to apply thresholding only on tensors of size $2\mbf{r}$. Nevertheless, the computation of the $d+1$ orthogonal components (namely, $\bd{\mcal{D}}$ and $\mbf{V}_i$ for $i = 1,\dots, d$) delineated in \eqref{equ: tensor tangent space} entails computational complexity comparable to that of $\scr{H}_{\mbf{r}}$, with the coefficient of the highest order term being dependent on the order $d$ \cite{kressner2014low, cai2020provable}.

\section{Algorithm and Main Results}\label{section: Algorithm and Main Results}
\subsection{Single Mode Quasi Riemannian Gradient Descent Algorithm}\label{subsection: SMQRGD algorithm}

In this work, we follow the framework in (\ref{equ: iht step}) and mainly focus on using ST-HOSVD for the projection operator $\scr{P}_{\Theta}$. In light of the methodology in RGD, our primary goal is to import the tangent space projection technique for the first mode truncated SVD of ST-HOSVD to achieve better computational efficiency. The details of our approach are outlined in Algorithm \ref{alg: SMQRGD}.
\begin{center} 
\begin{algorithm}[H]
\caption{Single Mode Quasi Riemannian Gradient Descent Algorithm}
\begin{algorithmic} 
\STATE\textbf{Initialization: } $\bd{\mcal{X}}^0 = \scr{H}^{\text{ST}}_{\mathbf{r}}(\scr{A}^*\scr{A}(\bd{\mcal{T}})), \bd{\hat{\mcal{X}}}^0 = \scr{H}_{r_{1}}(\scr{A}^*\scr{A}(\bd{\mcal{T}}))$. 
\FOR{$k = 0, 1, 2 \dots$}
\STATE $\bd{\mcal{G}}^k = \scr{A}^* \scr{A}(\bd{\mcal{T}} - \bd{\mcal{X}}^k)$\\
\STATE $\bd{\mcal{W}}^{k+1} = \scr{P}_{\hat{\bb{T}}^k}(\bd{\mcal{X}}^{k} + \alpha_{k}\bd{\mcal{G}}^k)$. \\
\STATE $\bd{\mcal{X}}^{k+1} = \scr{H}^{\text{ST}}_{\mathbf{r}}(\bd{\mcal{W}}^{k+1})$, $\bd{\hat{\mcal{X}}}^{k+1} = \scr{H}_{r_1}(\bd{\mcal{W}}^{k+1})$\\
\ENDFOR
\end{algorithmic} 
\label{alg: SMQRGD}
\end{algorithm}
\end{center}

Analogous to SeMPIHT, SM-QRGD uses ST-HOSVD $\scr{H}^{\text{ST}}_{\mathbf{r}}(\cdot)$ as the projection operator onto the low-rank tensor set $\Theta$. Distinct from SeMPIHT, SM-QRGD performs additional tangent space projection before the operation $\scr{H}^{\text{ST}}_{\mathbf{r}}(\cdot)$. The tangent space projection operator $\scr{P}_{\hat{\bb{T}}^k}$ is defined as 
\begin{equation*}
\scr{P}_{\hat{\bb{T}}^k} := \scr{M}_1^{-1}\circ\scr{P}_{\hat{T}^k}\circ \scr{M}_1,
\end{equation*}
where $\hat{T}^k$ is the sum of column space and row space of $\scr{M}_1(\bd{\hat{\mcal{X}}}^{k})$, defined by
\begin{equation}\label{equ: low-rank tangent space}
    \hat{T}^k := \left\{\mbf{Z} = \mbf{U}_1^k\mbf{R}^T + \mbf{L} (\mbf{V}_1^k)^T\text{ with } \mbf{L}\in \bb{R}^{n_1\times r_1}, \mbf{R}\in \bb{R}^{\prod\limits_{i = 2}^{d}n_i \times r_1}\right\},
\end{equation}
where $\mbf{U}_1^k$ and $\mbf{V}_1^k$ are left and right singular matrices of $\scr{M}_1(\bd{\hat{\mcal{X}}}^{k})$, which can be directly obtained from the ST-HOSVD process in the previous iteration. Then, following \cite{wei2016guarantees}, the projection operator of $\scr{P}_{\hat{T}^k}$ for a given $\mbf{Y}$ is calculated as
\begin{equation}\label{equ: tangent space projection}
    \scr{P}_{\hat{T}^k}(\mbf{Y}) = \mbf{U}_1^k(\mbf{U}_1^k)^T \mbf{Y} + \mbf{Y} \mbf{V}_1^k (\mbf{V}_1^k)^T - \mbf{U}_1^k (\mbf{U}_1^k)^T \mbf{Y} \mbf{V}_1^k (\mbf{V}_1^k)^T.
\end{equation}

We give the following conceptual illustration of how the additional tangent space projection can be advantageous in reducing the computational cost. The detailed complexity analysis is deferred to Appendix  \ref{Appendix: Computational complexity}. We note that in \eqref{equ: low-rank tangent space}  matrices within $\hat{T}^k$ possess a rank at most $2r_1$, which implies that the projection $\scr{P}_{\hat{T}^k}(\mbf{Y})$ can be expressed as $\mbf{A}\mbf{B}^T$, where $\mbf{A}$ and $\mbf{B}$ represent matrices of dimensions ${n_1\times 2r_1}$ and ${(\prod_{i = 2}^d n_i)\times 2r_1}$. Implementing QR decompositions on $\mbf{A}$ and $\mbf{B}$ results in $\mbf{A} = \mbf{Q}_1 \mbf{R}_1$ and $\mbf{B} = \mbf{Q}_2 \mbf{R}_2$, leading to $\scr{P}_{\hat{T}^k}(\mbf{Y}) = \mbf{Q}_1 \mbf{R}_1 \mbf{R}_2^T \mbf{Q}_2^T$. It is observed that $\mbf{Q}_1$ and $\mbf{Q}_2$ are both orthogonal matrices with dimensions ${n_1\times 2r_1}$ and ${(\prod_{i=2}^d n_i)\times 2r_1}$.
This orthogonality enables the computation of truncated-$r_1$ SVD of $\mbf{Y}$ to a smaller $2r_1\times 2r_1$ matrix $\mbf{R}_1 \mbf{R}_2^T$, which can effectively reduce the computational cost of the first mode SVD calculations in ST-HOSVD. 

While the single mode tangent space projection cannot be generally treated as the classical Riemannian gradient descent on a low-rank tensor due to the mismatch between $\hat{T}^k$ and the tangent space of $\scr{M}_1(\bd{\mcal{X}}^k)$, the following Lemma \ref{lemma: lmPSTS} indicates that $\scr{M}_1(\bd{\mcal{X}}^k)$ actually belongs to  $\hat{T}^k$. 
\begin{lemma}[Projection onto Substitution Tangent Space]\label{lemma: lmPSTS}
The projection of $\bd{\mcal{X}}^k$ onto the mode-1 tangent space of $\bd{\hat{\mcal{X}}}^k$ is still $\bd{\mcal{X}}^k$, e.g., $\scr{P}_{\hat{\bb{T}}^k}(\bd{\mcal{X}}^k) = \bd{\mcal{X}}^k$.
\end{lemma}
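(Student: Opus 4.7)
The plan is to unfold to matrix form and exploit the fact that the auxiliary iterate $\bd{\hat{\mcal{X}}}^k$ and the SM-QRGD iterate $\bd{\mcal{X}}^k$ share the same mode-1 left singular space. By the definition of $\scr{P}_{\hat{\bb{T}}^k} = \scr{M}_1^{-1}\circ \scr{P}_{\hat{T}^k}\circ \scr{M}_1$, the claim is equivalent to
\begin{equation*}
    \scr{P}_{\hat{T}^k}\bigl(\scr{M}_1(\bd{\mcal{X}}^k)\bigr) = \scr{M}_1(\bd{\mcal{X}}^k),
\end{equation*}
so I would reduce to this matrix statement first.

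Next, I would track what $\mbf{U}_1^k$ really is. Both $\bd{\mcal{X}}^k = \scr{H}^{\text{ST}}_{\mathbf{r}}(\bd{\mcal{W}}^k)$ and $\bd{\hat{\mcal{X}}}^k = \scr{H}_{r_1}(\bd{\mcal{W}}^k)$ perform a mode-1 truncated-$r_1$ SVD of $\scr{M}_1(\bd{\mcal{W}}^k)$ as their first step; the subsequent modal truncations in ST-HOSVD act only on the core tensor $\scr{M}_1^{-1}(\mbf{\Sigma}_1\mbf{V}_1^T)$ and never modify the first factor. Hence the leading left singular matrix $\mbf{U}_1^k$ of $\scr{M}_1(\bd{\hat{\mcal{X}}}^k)$ coincides with the first factor matrix of the ST-HOSVD decomposition of $\bd{\mcal{X}}^k$, so
\begin{equation*}
    \bd{\mcal{X}}^k = \bd{\mcal{C}}^k \times_1 \mbf{U}_1^k \times_2 \mbf{U}_2^k \times_3 \cdots \times_d \mbf{U}_d^k,
\end{equation*}
and consequently $\scr{M}_1(\bd{\mcal{X}}^k) = \mbf{U}_1^k \mbf{M}$ for the matrix $\mbf{M} = \scr{M}_1\bigl(\bd{\mcal{C}}^k\times_2 \mbf{U}_2^k\times_3\cdots\times_d \mbf{U}_d^k\bigr)$. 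In particular the column space of $\scr{M}_1(\bd{\mcal{X}}^k)$ lies in the column span of $\mbf{U}_1^k$, i.e., $\mbf{U}_1^k(\mbf{U}_1^k)^T \scr{M}_1(\bd{\mcal{X}}^k) = \scr{M}_1(\bd{\mcal{X}}^k)$.

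Finally I would substitute $\mbf{Y} = \scr{M}_1(\bd{\mcal{X}}^k)$ into the explicit projection formula \eqref{equ: tangent space projection}. The first term equals $\mbf{Y}$ by the column-space identity just noted, and the second and third terms cancel for the same reason, yielding $\scr{P}_{\hat{T}^k}(\mbf{Y}) = \mbf{Y}$. Applying $\scr{M}_1^{-1}$ gives the claim.

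No step requires genuine effort: the only item worth being careful about is the bookkeeping in the second paragraph, namely verifying that the $\mbf{U}_1^k$ appearing in the definition of $\hat{T}^k$ (extracted from $\bd{\hat{\mcal{X}}}^k$) is literally the same $\mbf{U}_1^k$ used in the ST-HOSVD factorization of $\bd{\mcal{X}}^k$. This is immediate from the definition of Algorithm \ref{alg: SMQRGD}, but it is the one place where the proof could go wrong if the auxiliary iterate $\bd{\hat{\mcal{X}}}^k$ were produced by any mechanism other than the first ST-HOSVD step on the same tensor $\bd{\mcal{W}}^k$.
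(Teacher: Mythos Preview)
Your proposal is correct and follows essentially the same route as the paper: both arguments unfold to mode-1, observe that $\scr{M}_1(\bd{\mcal{X}}^k)$ has column space contained in that of $\mbf{U}_1^k$ (so $\mbf{U}_1^k(\mbf{U}_1^k)^T\scr{M}_1(\bd{\mcal{X}}^k)=\scr{M}_1(\bd{\mcal{X}}^k)$), and then plug into the explicit formula \eqref{equ: tangent space projection} to see the remaining two terms cancel. Your explicit remark that $\bd{\hat{\mcal{X}}}^k$ and $\bd{\mcal{X}}^k$ share the same first-mode factor $\mbf{U}_1^k$ because both arise from the same mode-1 truncated SVD of $\bd{\mcal{W}}^k$ is exactly the bookkeeping the paper leaves implicit in its notation.
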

\begin{proof}
As $\bd{\mcal{X}}^k = \scr{H}^{\text{ST}}_{\mathbf{r}} (\bd{\mcal{W}}^k) = \bd{\mcal{W}}^k\times_{i\in [d]} \mbf{U}_i^k (\mbf{U}_i^k)^T$, then
\begin{equation*}\begin{aligned}
\scr{M}_1(\bd{\mcal{X}}^k)& = \mbf{U}_1^k (\mbf{U}_1^k)^T \scr{M}_1(\bd{\mcal{W}}^k)\left(\mbf{U}_d^k (\mbf{U}_d^k)^T \otimes \cdots \otimes \mbf{U}_2^k (\mbf{U}_2^k)^T\right)\\
& = \mbf{U}_1^k \mbf{\Sigma}_1 (\mbf{V}_1^k)^T \left(\mbf{U}_d^k (\mbf{U}_d^k)^T \otimes \cdots \otimes \mbf{U}_2^k (\mbf{U}_2^k)^T\right),
\end{aligned}
\end{equation*}
It can be obtained that $\mbf{U}_1^k (\mbf{U}_1^k)^T\scr{M}_1(\bd{\mcal{X}}^k) = \scr{M}_1(\bd{\mcal{X}}^k)$. By the tangent space formula in \eqref{equ: tangent space projection},
\begin{equation*}
\begin{aligned}
\scr{P}_{\hat{T}^k}(\scr{M}_1(\bd{\mcal{X}}^k)) &= \scr{M}_1(\bd{\mcal{X}}^k) + \scr{M}_1(\bd{\mcal{X}}^k) \mbf{V}_1^k (\mbf{V}_1^k)^T - \scr{M}_1(\bd{\mcal{X}}^k) \mbf{V}_1^k (\mbf{V}_1^k)^T\\
& = \scr{M}_1(\bd{\mcal{X}}^k),
\end{aligned}
\end{equation*}
Thus, $\scr{P}_{\hat{\bb{T}}^k}(\bd{\mcal{X}}^k) = \bd{\mcal{X}}^k$.
\end{proof}

Therefore, Algorithm \ref{alg: SMQRGD} can be characterized as a quasi-type of Riemannian gradient descent on the chosen single mode, as illustrated in Figure \ref{fig: Illustration}. 

For the step size, we can use the following constant step size or the normalized step size,  similar to the approach used in IHT.
\begin{equation}\label{equ: step size choice}
    \begin{aligned}
    \textbf{Constant step size: } &\ \ \alpha_k = 1 ,\\
    \textbf{Normalized step size: } &\ \ \alpha_k = \frac{\bnm\scr{P}_{\hat{\bb{T}}^k}(\bd{\mcal{G}}^k)\bnm_F^2}{\bnm\scr{A}(\scr{P}_{\hat{\bb{T}}^k}(\bd{\mcal{G}}^k))\bnm_F^2}.
    \end{aligned}
\end{equation}

With Lemma \ref{lemma: lmPSTS} and by the inspection of \eqref{equ: IHT stepsize}, it can be seen that the normalized step size is also the steepest as $\scr{P}_{\hat{\bb{T}}^k}(\bd{\mcal{X}}^{k} + \alpha_{k}\bd{\mcal{G}}^k)= \bd{\mcal{X}}^{k} + \alpha_{k}\scr{P}_{\hat{\bb{T}}^k}(\bd{\mcal{G}}^k)$.

\begin{figure}[H]
\centering
\begin{minipage}[t]{0.48\linewidth}
		\centering
		\includegraphics[width=.95\linewidth]{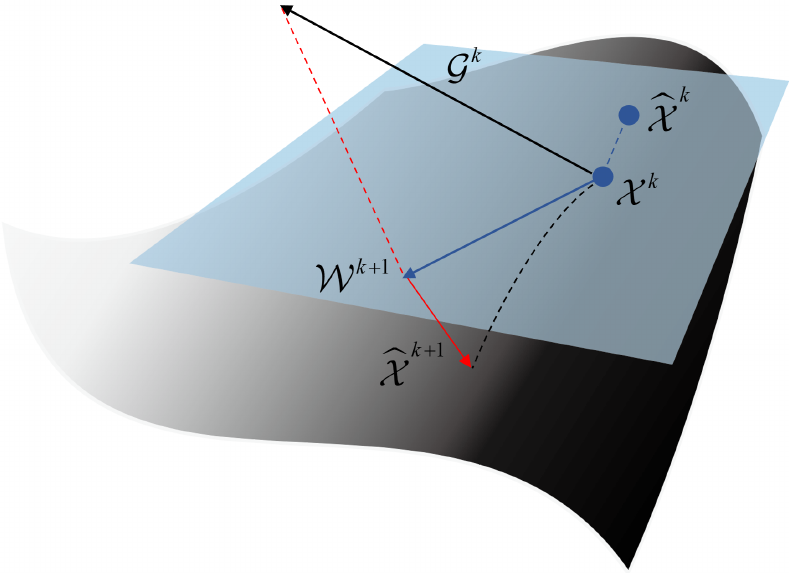}
\end{minipage}
\caption{Illustration for the SM-QRGD Algorithm. }
\label{fig: Illustration}
\end{figure}

\subsection{Complexity Analysis and Comparison}\label{subsection: complexity}

In this section, we compare the computational complexity of the proposed method SM-QRGD with  NIHT, SeMPIHT, and RGD (the detailed derivations of the results can be found in Appendix \ref{Appendix: Computational complexity}). For ease of exposition, we assume that $n_i = n, r_i = r, i = 1, \dots, d$, and the results are reported in Table \ref{tab: Comparison in complexity}.
\begin{table}[H]
\small
    \centering
    \begin{tabular}{ccc}
        \hline
        \hline
        Algorithm & Thresholding Operation &  Step Size Normalization\\
        \hline
        \textcolor{black}{NIHT} \cite{rauhut2017low} & $ O(d n^d r)$ & $2 n^d r + o(n^d r) + O(n^d)$ \\
        \textcolor{black}{SeMPIHT} \cite{de2017low} & $ O(n^d r)$ & $2 n^d r + o(n^d r) + O(n^d)$ \\
        RGD \cite{cai2020provable} & $(d+2) n^d r+ o(n^d r)$ & $(d+1) n^d r + o(n^d r) + O(n^d)$ \\
        SM-QRGD \textbf{(this paper)} & $3 n^d r + o(n^d r)$ & $4 n^d r + o(n^d r) + O(n^d)$ \\
        \hline 
    \end{tabular}
    \caption{The computation complexity for thresholding operation and step size normalization of four algorithms in each iteration.}
    \label{tab: Comparison in complexity}
\end{table}
Table \ref{tab: Comparison in complexity} demonstrates that the primary terms of the four algorithms are identical, with differences in the coefficients in front of them. Compared to RGD, the complexity coefficient for computing the thresholding operator in SM-QRGD is constant and usually smaller. Also, the SM-QRGD method has a lower step size normalization computational cost than the RGD method as $d\geq 3$. Compared to SeMPIHT, although SM-QRGD has a similar computational cost to SeMPIHT, SM-QRGD can be practically more efficient due to the difference in the first mode calculation during the ST-HOSVD process. Specifically, SeMPIHT requires the computation of a truncated-$r$ SVD of a non-structured $n\times n^{d-1}$ matrix, generally incurring a computational cost of $O(n^d r)$ flops, with a big coefficient in front of the $n^d r$ depending on the chosen SVD algorithm. Conversely, for SM-QRGD, the most costly part arises from matrix products within the tangent space projection, which only requires $3 n^d r$ flops and can be further accelerated by parallel computing. Therefore, SM-QRGD can be practically more appealing than SeMPIHT, as evidenced in our numerical simulations in Section \ref{section: numerical experiments}.

\subsection{Convergence and Recovery Guarantee}
As in \cite{rauhut2017low, de2017low, cai2020provable}, the convergence analysis of tensor recovery algorithms relies on the restricted isometry property (RIP) condition of the operator $\scr{A}$. SM-QRGD requires the operator $\scr{A}$ to have the following first-mode RIP condition.
\begin{definition}[Tensor First-mode Restricted Isometry Property]
\label{def: RIP property}
For the linear operator $\scr{A}: \bb{R}^{n_1\times\dots\times n_d}\longmapsto \bb{R}^{m}$, an operator $\scr{A}$ is said to satisfy the first-mode RIP with rank $r_1$, if there exists a constant $R_{r_1}$ such that for all $\bd{\mcal{Z}}\in \bb{R}^{n_1\times\dots\times n_d}$ with $\rank (\scr{M}_1(\bd{\mcal{Z}}))\leq r_1$, the following inequality holds
\begin{equation}\label{equ: mode-1 RIP condition}
(1- R_{r_1})\bnm\bd{\mcal{Z}}\bnm_F^2\leq \bnm\scr{A}(\bd{\mcal{Z}})\bnm_F^2\leq (1+R_{r_1})\bnm\bd{\mcal{Z}}\bnm_F^2.
\end{equation}
The constant $R_{r_1}$ is called the first-mode restricted isometry constant (1-RIC).
\end{definition}
\begin{remark}
    The first-mode RIP can be regarded as a particular case of TRIP with multilinear rank $\bar{\mathbf{r}} = (r_1, n_2,\dots, n_d)$. Suppose $n_i = n, r_i = r, i = 1,\dots, d$ and $\mbf{y}$ are i.i.d subgaussian measurements, Theorem 2 in \cite{rauhut2017low} shows that \eqref{equ: mode-1 RIP condition}  holds with high probability provided that the number $m\geq R_{r}^{-2} O(r n^{d-1}+d n^2)$.
\end{remark}

\begin{theorem}[Recovery guarantee with $\alpha = 1$]\label{thm: alpha =1}
Assume $\scr{A}$ satisfies Definition \ref{def: RIP property}.
Define the following constant:
\begin{equation*}
     \gamma_1 :=  R_{3 r_1}\left(8\sqrt{r_1}\kappa_1\left((\sqrt{d-1}+1) (R_{3 r_1}+2) + R_{3 r_1}\right) + \sqrt{d} + 3\right),
\end{equation*}
where $\kappa_1 := \sigma_1(\scr{M}_1(\mcal{T}))/\sigma_{r_1}(\scr{M}_1(\mcal{T}))$.
In particular, $\gamma_1 < 1$ can be satisfied if
\begin{equation*}
R_{3 r_1} < \min \left(\frac{1}{2}, \frac{1}{ (20\sqrt{d-1} + 24)\sqrt{r_1}\kappa_1 + \sqrt{d} + 3} \right),
\end{equation*}
then the iterates of the SM-QRGD algorithm with constant step size $\alpha = 1$ and the initial point $\bd{\mcal{X}}^0 = \scr{H}^{ST}_{\mathbf{r}}(\scr{A}^*\scr{A}(\bd{\mcal{T}}))$ satisfy
\begin{equation*}
\bnm\bd{\mcal{X}}^{k+1} - \bd{\mcal{T}}\bnm_F\leq (\sqrt{d}+1)(\frac{5}{2} \sqrt{d-1} + 3) \gamma_1^{k}\bnm\bd{\mcal{T}}\bnm_F.
\end{equation*}

\end{theorem}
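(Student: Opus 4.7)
The plan is to establish a per-iteration linear contraction on $\|\bd{\mcal{X}}^{k+1} - \bd{\mcal{T}}\|_F$ by (i) peeling off the ST-HOSVD thresholding error via the quasi-projection property in Proposition \ref{prop: quasipro}, (ii) using Lemma \ref{lemma: lmPSTS} to decompose the pre-thresholding residual $\bd{\mcal{W}}^{k+1}-\bd{\mcal{T}}$ into orthogonal tangent and normal components with respect to $\hat{\bb{T}}^k$, and (iii) bounding each piece separately while tracking the auxiliary iterate $\bd{\hat{\mcal{X}}}^k$ alongside $\bd{\mcal{X}}^k$ through a coupled induction. Since $\bd{\mcal{T}}$ has multilinear rank $\mbf{r}$, Proposition \ref{prop: quasipro} and the triangle inequality give $\|\bd{\mcal{X}}^{k+1}-\bd{\mcal{T}}\|_F \leq (\sqrt{d}+1)\|\bd{\mcal{W}}^{k+1}-\bd{\mcal{T}}\|_F$. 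With $\alpha_k=1$, applying Lemma \ref{lemma: lmPSTS} and adding-and-subtracting $\scr{P}_{\hat{\bb{T}}^k}(\bd{\mcal{T}}-\bd{\mcal{X}}^k)$ yields
\begin{equation*}
\bd{\mcal{W}}^{k+1} - \bd{\mcal{T}} = \scr{P}_{\hat{\bb{T}}^k}\big((\scr{A}^*\scr{A}-\mathcal{I})(\bd{\mcal{T}}-\bd{\mcal{X}}^k)\big) - \scr{P}_{\hat{\bb{T}}^k^\perp}(\bd{\mcal{T}}),
\end{equation*}
where the last term uses $\bd{\mcal{X}}^k\in\hat{\bb{T}}^k$ (from Lemma \ref{lemma: lmPSTS}) to drop $\scr{P}_{\hat{\bb{T}}^k^\perp}(\bd{\mcal{X}}^k)$. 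The Pythagorean identity then splits $\|\bd{\mcal{W}}^{k+1}-\bd{\mcal{T}}\|_F^2$ into the sum of the squared norms of the two orthogonal pieces.

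For the tangent piece, $\bd{\mcal{T}}-\bd{\mcal{X}}^k$ has mode-1 rank at most $2r_1$ and every element of $\hat{\bb{T}}^k$ has mode-1 rank at most $2r_1$, so a standard duality argument based on Definition \ref{def: RIP property} (splitting $\scr{P}_{\hat{\bb{T}}^k}\bd{\mcal{Y}}=\bd{\mcal{X}}^k+(\scr{P}_{\hat{\bb{T}}^k}\bd{\mcal{Y}}-\bd{\mcal{X}}^k)$ so that each RIP invocation involves mode-1 rank at most $3r_1$) produces $\|\scr{P}_{\hat{\bb{T}}^k}((\scr{A}^*\scr{A}-\mathcal{I})(\bd{\mcal{T}}-\bd{\mcal{X}}^k))\|_F \leq R_{3r_1}\|\bd{\mcal{T}}-\bd{\mcal{X}}^k\|_F$. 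For the normal piece, matricization gives $\|\scr{P}_{\hat{\bb{T}}^k^\perp}(\bd{\mcal{T}})\|_F = \|(I-\mbf{U}_1^k(\mbf{U}_1^k)^T)\scr{M}_1(\bd{\mcal{T}})(I-\mbf{V}_1^k(\mbf{V}_1^k)^T)\|_F$. Writing $\scr{M}_1(\bd{\mcal{T}})=\mbf{U}_1^*\mbf{\Sigma}_1^*(\mbf{V}_1^*)^T$ (rank $r_1$) and recalling that $\mbf{U}_1^k,\mbf{V}_1^k$ are the leading singular vectors of $\scr{M}_1(\bd{\hat{\mcal{X}}}^k)$ with $\sigma_{r_1+1}(\scr{M}_1(\bd{\hat{\mcal{X}}}^k))=0$, Wedin's $\sin\Theta$ theorem and the conversion $\|(I-P)\mbf{U}^*\|_F\leq\sqrt{r_1}\|\sin\Theta\|_{\mathrm{op}}$ yield a bound of order $\sqrt{r_1}\kappa_1\,\|\bd{\hat{\mcal{X}}}^k-\bd{\mcal{T}}\|_F$, accounting for the $\sqrt{r_1}\kappa_1$ factor in $\gamma_1$.

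The induction is closed by controlling $\|\bd{\hat{\mcal{X}}}^k-\bd{\mcal{T}}\|_F$ in terms of $\|\bd{\mcal{X}}^{k-1}-\bd{\mcal{T}}\|_F$. Since $\bd{\hat{\mcal{X}}}^k=\scr{H}_{r_1}(\bd{\mcal{W}}^k)$ is a best mode-1 rank-$r_1$ matrix approximation and $\bd{\mcal{T}}$ has mode-1 rank $r_1$, one gets $\|\bd{\hat{\mcal{X}}}^k-\bd{\mcal{T}}\|_F\leq 2\|\bd{\mcal{W}}^k-\bd{\mcal{T}}\|_F$; bounding $\|\bd{\mcal{W}}^k-\bd{\mcal{T}}\|_F$ by the same decomposition above (with the near-isometry of $\scr{A}^*\scr{A}$ furnishing the factor $R_{3r_1}+2$) and converting between $\bd{\mcal{X}}^{k-1}$ and $\bd{\hat{\mcal{X}}}^{k-1}$ through the $d-1$ trailing ST-HOSVD modes explains the $(\sqrt{d-1}+1)(R_{3r_1}+2)$ block in $\gamma_1$. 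Collating the bounds yields $\|\bd{\mcal{X}}^{k+1}-\bd{\mcal{T}}\|_F \leq \gamma_1 \|\bd{\mcal{X}}^k-\bd{\mcal{T}}\|_F$, and a one-shot application with $\bd{\mcal{X}}^{-1}=0$ produces the stated initialization constant $(\sqrt{d}+1)(\tfrac{5}{2}\sqrt{d-1}+3)\|\bd{\mcal{T}}\|_F$. The main obstacle is the normal-component bound and its coupling across the two sequences: because $\hat{\bb{T}}^k$ is only the mode-1 tangent space rather than the full tangent space at $\bd{\mcal{X}}^k$, the orthogonal residual does not vanish as in classical RGD and must be controlled quantitatively via Wedin's theorem, which is exactly why the conditioning $\kappa_1$ and the gap $\sigma_{r_1}(\scr{M}_1(\bd{\mcal{T}}))$ enter the final contraction constant.
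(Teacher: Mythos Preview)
Your decomposition of $\bd{\mcal{W}}^{k+1}-\bd{\mcal{T}}$ into a tangent piece and the normal piece $\scr{P}_{\hat{\bb{T}}^k^\perp}(\bd{\mcal{T}})$ is fine, and it matches the paper's split into $I_1,I_2,I_3$ (your tangent piece is $I_1+I_3$, your normal piece is $I_2$). The real problem is how you bound $I_2$.

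You propose a Wedin-type \emph{linear} estimate
\[
\bnm(\scr{I}-\scr{P}_{\hat{\bb{T}}^k})(\bd{\mcal{T}})\bnm_F \;\lesssim\; \sqrt{r_1}\,\kappa_1\,\bnm\bd{\hat{\mcal{X}}}^k-\bd{\mcal{T}}\bnm_F.
\]
This bound is valid, but it carries \emph{no} factor of $R_{3r_1}$. Since $\sqrt{r_1}\,\kappa_1\geq 1$ always, the resulting recursion coefficient cannot be pushed below $1$ by making $R_{3r_1}$ small, and your ``coupled induction'' collapses: after relating $\bnm\bd{\hat{\mcal{X}}}^k-\bd{\mcal{T}}\bnm_F$ to $\bnm\bd{\mcal{W}}^k-\bd{\mcal{T}}\bnm_F$ (or to $\bnm\bd{\mcal{X}}^{k-1}-\bd{\mcal{T}}\bnm_F$) you obtain a two-step inequality whose second coefficient is of order $\sqrt{r_1}\kappa_1$, not $R_{3r_1}\sqrt{r_1}\kappa_1$. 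That is why your attempt to read off the $(\sqrt{d-1}+1)(R_{3r_1}+2)$ block as a per-step contribution does not reproduce $\gamma_1$: in the paper, the outer $R_{3r_1}$ multiplying $8\sqrt{r_1}\kappa_1(\cdots)$ does \emph{not} come from RIP at step $k$---it comes from the initialization.

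The paper's key device is the \emph{quadratic} bound of Lemma~\ref{lemma: tsp error}/Corollary~\ref{cor: tsp error},
\[
\bnm(\scr{I}-\scr{P}_{\hat{\bb{T}}^k})(\bd{\mcal{T}})\bnm_F \;\leq\; \frac{1}{\sigma_{r_1}(\scr{M}_1(\bd{\mcal{T}}))}\,\bnm\bd{\hat{\mcal{X}}}^k-\bd{\mcal{T}}\bnm_F^2 \;\leq\; \frac{4}{\sigma_{r_1}(\scr{M}_1(\bd{\mcal{T}}))}\,\bnm\bd{\mcal{W}}^k-\bd{\mcal{T}}\bnm_F^2,
\]
which makes the recursion self-referential in $\bnm\bd{\mcal{W}}^k-\bd{\mcal{T}}\bnm_F$ (not in $\bnm\bd{\mcal{X}}^k-\bd{\mcal{T}}\bnm_F$). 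The missing $R_{3r_1}$ then appears only after invoking the explicit initialization estimate (Lemma~\ref{Iniest}) $\bnm\bd{\mcal{W}}^1-\bd{\mcal{T}}\bnm_F\leq 2R_{3r_1}\big((\sqrt{d-1}+1)(R_{3r_1}+2)+R_{3r_1}\big)\bnm\bd{\mcal{T}}\bnm_F$ together with $\bnm\bd{\mcal{T}}\bnm_F\leq\sqrt{r_1}\,\sigma_1(\scr{M}_1(\bd{\mcal{T}}))$, turning the quadratic coefficient $\tfrac{4}{\sigma_{r_1}}\bnm\bd{\mcal{W}}^1-\bd{\mcal{T}}\bnm_F$ into exactly $8R_{3r_1}\sqrt{r_1}\kappa_1(\cdots)$. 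So the argument is: quadratic normal bound $\Rightarrow$ monotone recursion on $\bnm\bd{\mcal{W}}^k-\bd{\mcal{T}}\bnm_F$ once $\gamma_1<1$ is verified at $k=1$ $\Rightarrow$ verify $k=1$ via Lemma~\ref{Iniest}. Your linear Wedin bound short-circuits this mechanism, and the ``one-shot application with $\bd{\mcal{X}}^{-1}=0$'' is not how the initialization constant arises.
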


The convergence results of the SM-QRGD algorithm with normalized step size are given as follows.
\begin{theorem}[Recovery guarantee with normalized step size]
With the same assumptions and notations in Theorem \ref{thm: alpha =1}. Define the following constant:
\begin{equation*}
\gamma_2 := \frac{2R_{3 r_1}}{1-R_{3 r_1}}\left(4 \sqrt{r_1}\kappa_1 \left(2 (\sqrt{d-1}+1) + R_{3 r_1}\right) + \sqrt{d}+2 \right),
\end{equation*}
where $\kappa_1 := \sigma_1(\scr{M}_1(\mcal{T}))/\sigma_{r_1}(\scr{M}_1(\mcal{T}))$.    
In particular, $\gamma_2< 1$ can be satisfied if
\begin{equation*}
R_{3 r_1} < \min \left(\frac{1}{2}, \frac{1}{\left(32 \sqrt{d-1} + 40\right)\sqrt{r_1}\kappa_1  + 4 \sqrt{d}+ 8} \right),
\end{equation*}
then the iterates of the SM-QRGD algorithm with normalized step size \eqref{equ: step size choice} and the initial point $\bd{\mcal{X}}^0 = \scr{H}^{ST}_{\mathbf{r}}(\scr{A}^*\scr{A}(\bd{\mcal{T}}))$ satisfy
\begin{equation*}
\bnm\bd{\mcal{X}}^{k+1} - \bd{\mcal{T}}\bnm_F
 \leq (\sqrt{d}+1)(8 \sqrt{d-1}+10)\gamma_2^{k} \bnm\bd{\mcal{T}}\bnm_F.
\end{equation*}

\label{thm: SMQRGD}
\end{theorem}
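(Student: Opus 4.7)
The plan is to follow the convergence analysis of Theorem \ref{thm: alpha =1} while carefully tracking the variable step size $\alpha_k$. First, Proposition \ref{prop: quasipro} combined with the triangle inequality (using that $\bd{\mcal{T}}$ is feasible for the rank-$\mathbf{r}$ projection) reduces the task to bounding $\bnm\bd{\mcal{W}}^{k+1}-\bd{\mcal{T}}\bnm_F$:
\begin{equation*}
\bnm\bd{\mcal{X}}^{k+1}-\bd{\mcal{T}}\bnm_F \leq (\sqrt{d}+1)\,\bnm\bd{\mcal{W}}^{k+1}-\bd{\mcal{T}}\bnm_F.
\end{equation*}
Using Lemma \ref{lemma: lmPSTS} I would rewrite $\bd{\mcal{W}}^{k+1} = \bd{\mcal{X}}^k + \alpha_k \scr{P}_{\hat{\bb{T}}^k}(\bd{\mcal{G}}^k)$ and split $\bd{\mcal{T}}$ into its $\hat{\bb{T}}^k$-projection and orthogonal complement. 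Since the first two summands lie in $\hat{\bb{T}}^k$ while the residual is normal to it, the Pythagorean identity yields
\begin{equation*}
\bnm\bd{\mcal{W}}^{k+1}-\bd{\mcal{T}}\bnm_F \leq \bnm\scr{P}_{\hat{\bb{T}}^k}(\bd{\mcal{E}}^k) - \alpha_k \scr{P}_{\hat{\bb{T}}^k}\scr{A}^*\scr{A}(\bd{\mcal{E}}^k)\bnm_F + \bnm(I-\scr{P}_{\hat{\bb{T}}^k})(\bd{\mcal{T}})\bnm_F,
\end{equation*}
where $\bd{\mcal{E}}^k := \bd{\mcal{X}}^k - \bd{\mcal{T}}$ and I have used $\bd{\mcal{G}}^k = -\scr{A}^*\scr{A}(\bd{\mcal{E}}^k)$.

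For the tangent term, I would first apply Definition \ref{def: RIP property} to $\scr{P}_{\hat{\bb{T}}^k}(\bd{\mcal{G}}^k)$, which lies in $\hat{\bb{T}}^k$ and hence has first-mode rank at most $2r_1$, obtaining $\alpha_k \leq 1/(1-R_{2r_1})$. Decomposing $\bd{\mcal{E}}^k = \scr{P}_{\hat{\bb{T}}^k}(\bd{\mcal{E}}^k) + (I-\scr{P}_{\hat{\bb{T}}^k})(\bd{\mcal{E}}^k)$, a key rank-counting observation is that $\scr{M}_1(\bd{\mcal{X}}^k)$ has column space contained in that of $\mbf{U}_1^k$; this forces $(I-\scr{P}_{\hat{\bb{T}}^k})(\bd{\mcal{E}}^k)$ to have first-mode rank at most $r_1$, because the $(I-\mbf{U}_1^k(\mbf{U}_1^k)^T)$ factor annihilates the $\bd{\mcal{X}}^k$ contribution and leaves only a rank-$r_1$ remnant from $\bd{\mcal{T}}$. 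Combining tangent tensors (first-mode rank $\leq 2r_1$) with this normal component produces sums of first-mode rank at most $3r_1$, which is precisely where $R_{3r_1}$ enters. A standard RIP polarization argument, analogous to that of Theorem \ref{thm: alpha =1} but with an extra factor $1/(1-R_{3r_1})$ inherited from the step-size upper bound, yields the leading coefficient $2R_{3r_1}/(1-R_{3r_1})$ of $\gamma_2$.

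For the normal term, since both $\scr{M}_1(\bd{\mcal{T}})$ and $\scr{M}_1(\bd{\hat{\mcal{X}}}^k)$ are rank-$r_1$ matrices, the classical curvature bound for the rank-$r_1$ matrix manifold at $\scr{M}_1(\bd{\hat{\mcal{X}}}^k)$ gives
\begin{equation*}
\bnm(I-\scr{P}_{\hat{\bb{T}}^k})(\bd{\mcal{T}})\bnm_F \leq \frac{\bnm\bd{\hat{\mcal{X}}}^k-\bd{\mcal{T}}\bnm_F^2}{2\sigma_{r_1}(\scr{M}_1(\bd{\hat{\mcal{X}}}^k))}.
\end{equation*}
Under an inductively maintained smallness hypothesis $\bnm\bd{\hat{\mcal{X}}}^k-\bd{\mcal{T}}\bnm_F \leq \sigma_{r_1}(\scr{M}_1(\bd{\mcal{T}}))/2$, Weyl's inequality replaces $\sigma_{r_1}(\scr{M}_1(\bd{\hat{\mcal{X}}}^k))$ by a constant multiple of $\sigma_{r_1}(\scr{M}_1(\bd{\mcal{T}}))$, and linearizing the quadratic by bounding one factor of $\bnm\bd{\hat{\mcal{X}}}^k-\bd{\mcal{T}}\bnm_F$ by $C\sqrt{r_1}\,\sigma_1(\scr{M}_1(\bd{\mcal{T}}))$ (maintained inductively) produces the $\sqrt{r_1}\kappa_1$ prefactor. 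Finally, to translate $\bnm\bd{\hat{\mcal{X}}}^k-\bd{\mcal{T}}\bnm_F$ into the induction variable $\bnm\bd{\mcal{X}}^k-\bd{\mcal{T}}\bnm_F$, the quasi-projection of ST-HOSVD restricted to the remaining $d-1$ modes accounts for the $\sqrt{d-1}+1$ factor appearing in $\gamma_2$.

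Combining the tangent and normal bounds with the outer $(\sqrt{d}+1)$ factor produces a one-step contraction $\bnm\bd{\mcal{X}}^{k+1}-\bd{\mcal{T}}\bnm_F \leq \gamma_2 \bnm\bd{\mcal{X}}^k-\bd{\mcal{T}}\bnm_F$ that iterates to the stated geometric decay. The main obstacle is the interplay between the inductive smallness hypothesis and the contraction rate: both must be propagated simultaneously under the stated condition on $R_{3r_1}$, and the smallness condition is essential because the Weyl step degrades if $\bd{\hat{\mcal{X}}}^k$ is permitted to drift too far from $\bd{\mcal{T}}$. The reason $\gamma_2$ exceeds $\gamma_1$ is precisely the $1/(1-R_{3r_1})$ factor from the step-size upper bound, which, unlike the unit step size used in Theorem \ref{thm: alpha =1}, cannot be absorbed as a benign constant and instead inflates the leading coefficient of the contraction.
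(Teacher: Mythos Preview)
Your decomposition into a tangent piece, a normal piece, and the RIP-based step-size bound $\alpha_k\le 1/(1-R_{2r_1})$ matches the paper. The gap is in the choice of induction variable and the bookkeeping that follows.

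You run the recursion on $\bnm\bd{\mcal{X}}^k-\bd{\mcal{T}}\bnm_F$, but the normal term requires control of $\bnm\bd{\hat{\mcal{X}}}^k-\bd{\mcal{T}}\bnm_F$, and the conversion you invoke (``quasi-projection on the remaining $d-1$ modes'') points the wrong way: since $\bd{\mcal{X}}^k$ is obtained from $\bd{\hat{\mcal{X}}}^k$ by ST-HOSVD on modes $2,\dots,d$, Proposition~\ref{prop: quasipro} only gives $\bnm\bd{\mcal{X}}^k-\bd{\mcal{T}}\bnm_F\le(\sqrt{d-1}+1)\bnm\bd{\hat{\mcal{X}}}^k-\bd{\mcal{T}}\bnm_F$, not the reverse inequality you need. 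There is no clean bound of $\bnm\bd{\hat{\mcal{X}}}^k-\bd{\mcal{T}}\bnm_F$ by $\bnm\bd{\mcal{X}}^k-\bd{\mcal{T}}\bnm_F$. The paper avoids this by taking $\bnm\bd{\mcal{W}}^k-\bd{\mcal{T}}\bnm_F$ as the recursion variable: Lemma~\ref{lemma: substitution error} bounds \emph{both} $\bnm\bd{\mcal{X}}^k-\bd{\mcal{T}}\bnm_F\le(\sqrt{d}+1)\bnm\bd{\mcal{W}}^k-\bd{\mcal{T}}\bnm_F$ and $\bnm\bd{\hat{\mcal{X}}}^k-\bd{\mcal{T}}\bnm_F\le 2\bnm\bd{\mcal{W}}^k-\bd{\mcal{T}}\bnm_F$, so all three terms $I_1,I_2,I_3$ land on the same quantity and yield \eqref{equ: sum up error estimation}. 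With that choice the $(\sqrt{d}+1)$ factor is absorbed into the linear coefficient of the $\bd{\mcal{W}}$-recursion (together with the $I_3$ contribution this produces the $\sqrt{d}+2$ in $\gamma_2$), and the outer $(\sqrt{d}+1)$ in the statement appears only once, at the very end when passing from $\bd{\mcal{W}}^{k+1}$ back to $\bd{\mcal{X}}^{k+1}$. Your proposed one-step contraction $\bnm\bd{\mcal{X}}^{k+1}-\bd{\mcal{T}}\bnm_F\le\gamma_2\bnm\bd{\mcal{X}}^k-\bd{\mcal{T}}\bnm_F$ would instead accumulate a $(\sqrt{d}+1)$ factor at every iteration and cannot reproduce the stated rate.

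Two smaller points. First, the $\sqrt{d-1}+1$ in $\gamma_2$ does not come from a per-step conversion; it enters only through the initialization bound of Lemma~\ref{Iniest} on $\bnm\bd{\mcal{X}}^0-\bd{\mcal{T}}\bnm_F$, which feeds into the estimate of $\bnm\bd{\mcal{W}}^1-\bd{\mcal{T}}\bnm_F$ used to linearize the quadratic term for $k=1$ (after which monotonicity propagates it). Second, the Weyl step and the inductive hypothesis $\bnm\bd{\hat{\mcal{X}}}^k-\bd{\mcal{T}}\bnm_F\le\tfrac12\sigma_{r_1}(\scr{M}_1(\bd{\mcal{T}}))$ are unnecessary: Corollary~\ref{cor: tsp error} already places $\sigma_{r_1}(\scr{M}_1(\bd{\mcal{T}}))$ (not $\sigma_{r_1}(\scr{M}_1(\bd{\hat{\mcal{X}}}^k))$) in the denominator, so no perturbation argument is needed.
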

\begin{remark}
    It is observed in Theorem \ref{thm: alpha =1} and \ref{thm: SMQRGD} that we need to ensure the 1-RIC is sufficiently small for convergence. This condition, however, can be met with high probability when the number of measurements $m\geq O(d r^2 n^{d-1} \kappa_1^2)$ (assume $n_i = n, r_i = r, i = 1,\dots, d$.)
\end{remark}

\section{Numerical Experiments}\label{section: numerical experiments}
In this section, we evaluate the proposed algorithm for solving the tensor completion problem, i.e., the operator $\scr{A}$ in (\ref{model: low-rank tensor recovery}) is projection operator $\scr{P}_{\Omega}$. We mainly focus on a cubic tensor of dimension
$\mbf{n} = (n, n, n)$ with multilinear rank $\mathbf{r} = (r, r, r), r \leq n$. To initiate the process, we generate a random tensor by independently sampling its entries from a standard normal distribution. Subsequently, we apply T-HOSVD to transform this random tensor into a low-multilinear-rank tensor denoted as $\bd{\mcal{T}}$.
The measurement $\mbf{y}$ is calculated by $\mbf{y} = \scr{P}_{\Omega}(\bd{\mcal{T}})$, where $\Omega$ is randomly sampled from the indices of $\bd{\mcal{T}}$ with a sampling ratio $\rho = |\Omega|/n^3$. We use the following relative error under the Frobenius norm as the metric to measure the recovery quality:
\begin{equation*}
\mathbf{err}^k := \frac{\bnm\bd{\mcal{X}}^k - \bd{\mcal{T}}\bnm_F}{\bnm\bd{\mcal{T}}\bnm_F}.
\end{equation*}

\begin{figure}[htp]
\centering
	\subfigure[Phase Transition]{\includegraphics[width=.48\linewidth]{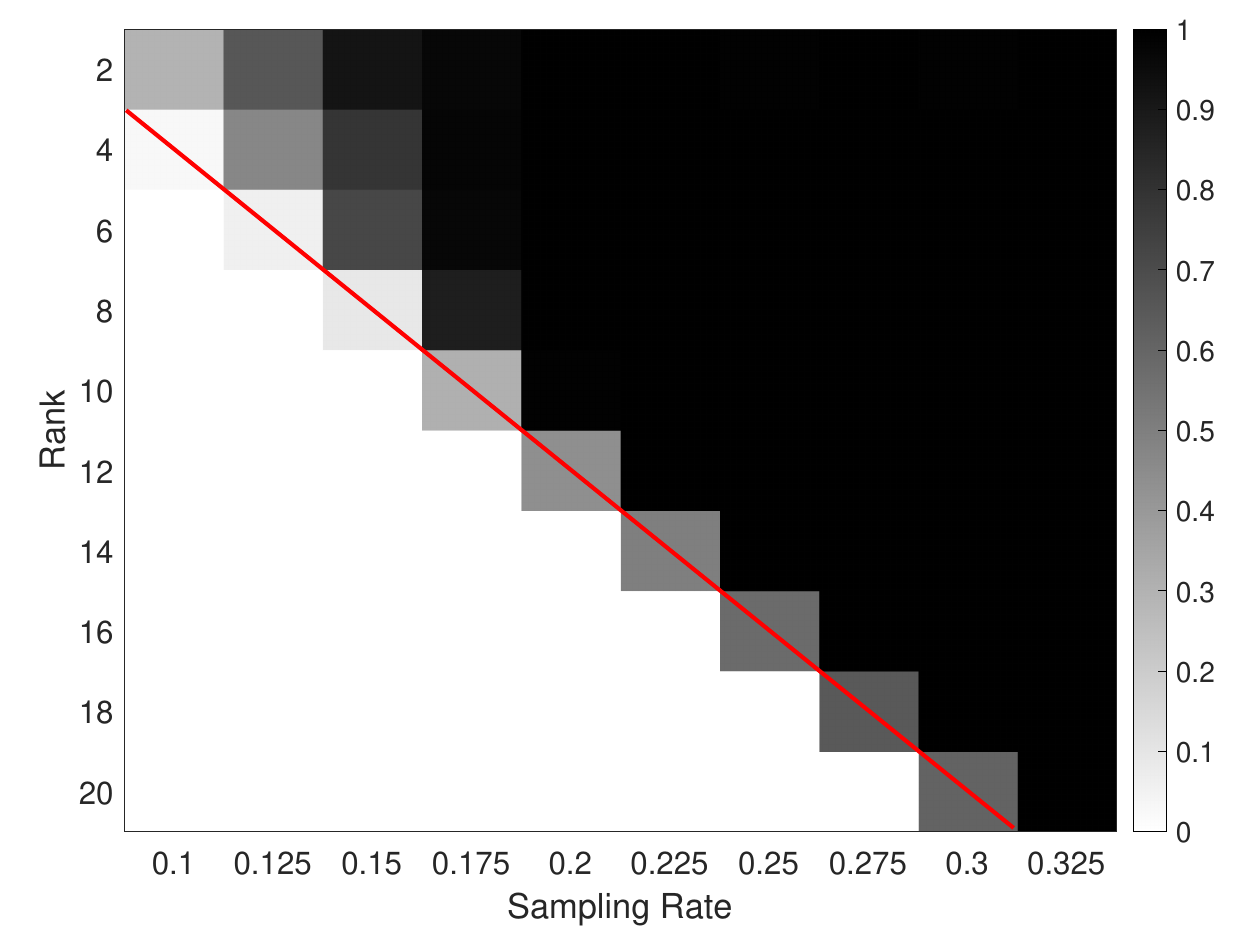}
	\label{subfig: phase transition}}
	\subfigure[Noisy Data Recovery]{
	\includegraphics[width=.48\linewidth]{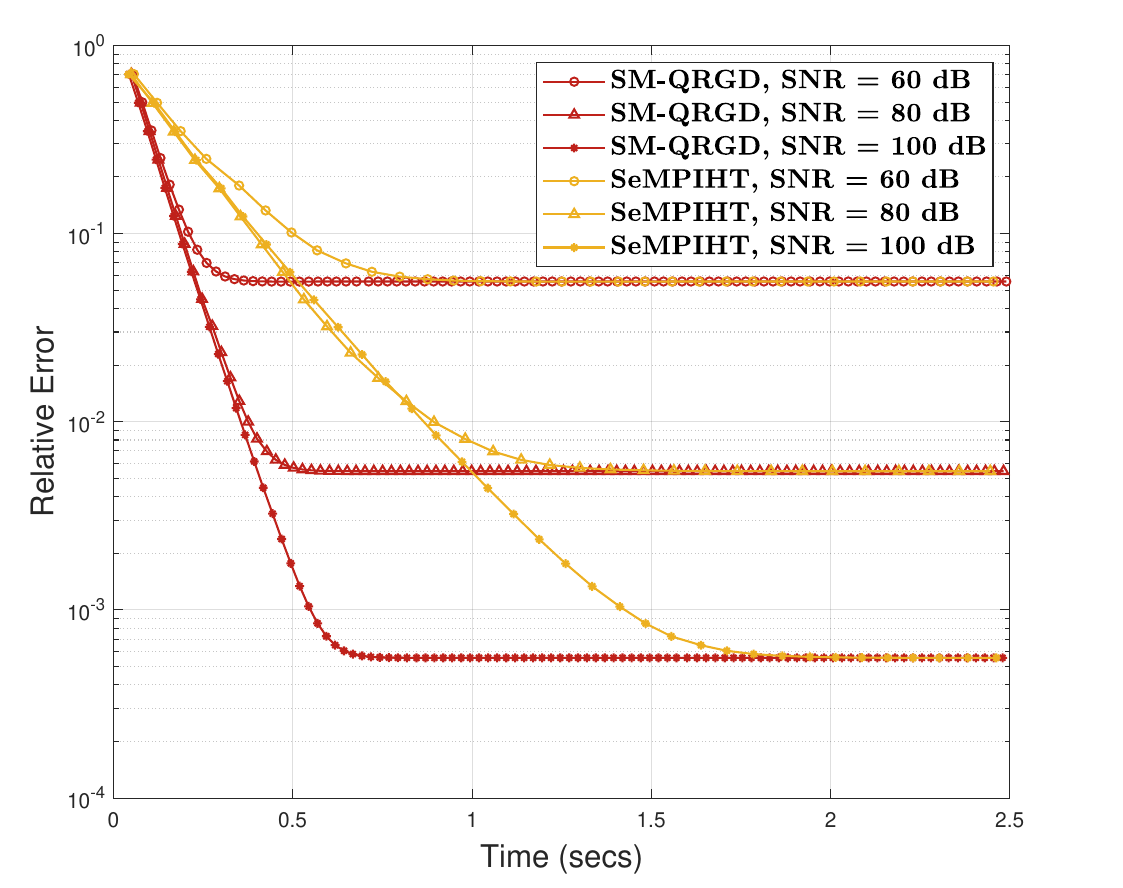}
	\label{subfig: noisy data recovery}}
	\caption{(a): The phase transition plot for varying rank $r$ and sampling rate $\rho$ when $n = 100$. (b): The CPU times of SM-QRGD and SeMPIHT v.s. relative error under different signal-to-noise ratios when $n = 100, r = 3$ and $\rho = 0.3$.}
	\label{fig: Phase transition and Robust}
\end{figure}

\textbf{Phase transition of SM-QRGD.} 
We set the tensor size to $n = 100$ and vary the rank $r$ and the sampling rate $\rho$. For every combination of rank and sampling rate, 100 random tests are performed to determine the success rate. A test is deemed as a successful recovery if the relative error between the reconstructed tensor and $\bd{\mcal{T}}$   is less than or equal to $10^{-5}$. The phase transition is depicted in Fig. \ref{subfig: phase transition}. These results indicate a linear relationship between the sampling complexity $\rho$ necessary for successful recovery and the rank $r$. The results are aligned with prevailing findings concerning the required sampling complexity for successful tensor recovery.

\textbf{Robustness of SM-QRGD.} We evaluate the robustness of the SM-QRGD algorithm in scenarios where the observed data is contaminated by additive noise.
The noisy observations are given by $\scr{P}_{\Omega}(\bd{\mcal{T}} + \bd{\mcal{J}})$, where $\bd{\mcal{J}}(i_1, i_2, i_3)$ follows a normal distribution $\mcal{N}(0, \sigma_{\bd{\mcal{J}}}^2)$. The noise level is set following the approach used in \cite{tong2022scaling}. 

The comparison of SM-QRGD and SeMPIHT using constant stepsize $\alpha_k = 1$ is presented in Fig. \ref{subfig: noisy data recovery}. Three distinct noise levels, i.e., $\text{SNR} = 60, 80, 100\ \text{dB}$, are considered (The Signal-to-Noise Ratio (SNR) in dB are defined as $\text{SNR} := 10\log_{10}\frac{\|\bd{\mcal{T}}\|_F^2}{n^3\sigma_{\bd{\mcal{J}}}^2}$). Our SM-QRGD method reaches the same relative error as the SeMPIHT method but exhibits a notably faster convergence speed. 
\begin{figure}[htp]
	\begin{minipage}[t]{0.49\linewidth}
		\centering
		\subfigure[Iteration v.s. Relative Error]{
		\includegraphics[width=.96\linewidth]{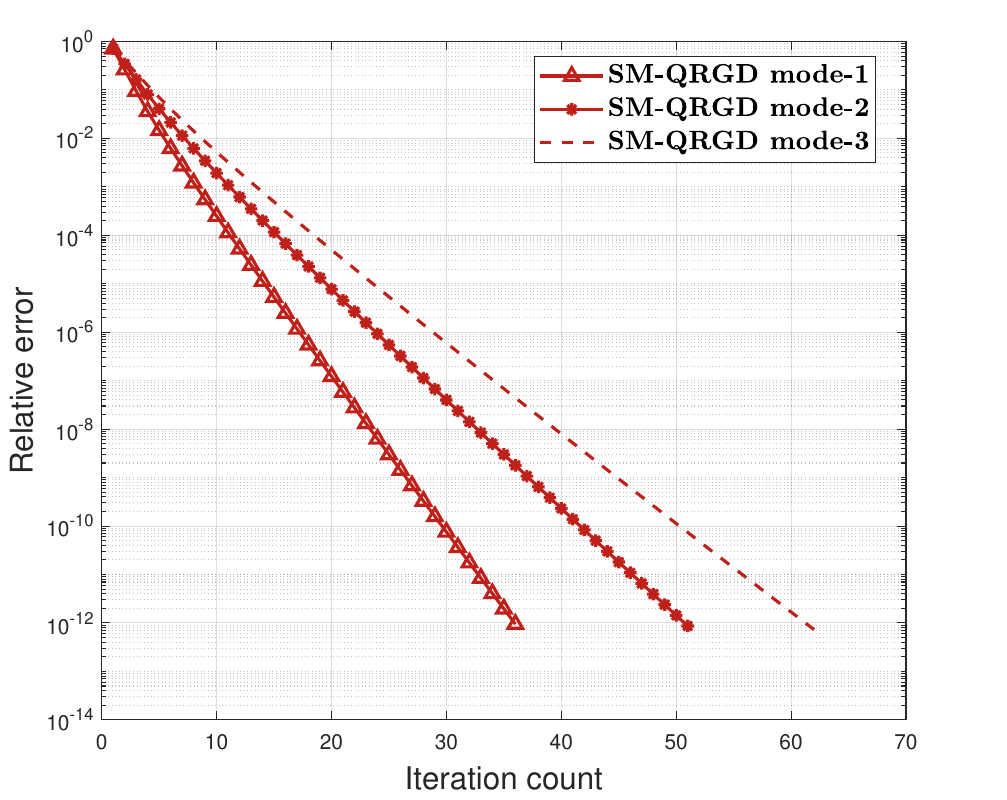}
		\label{subfig: diff rank iter vs error}}
	\end{minipage}
	\begin{minipage}[t]{0.49\linewidth}
		\centering
		\subfigure[Runtime v.s. Relative Error]{
		\includegraphics[width=.96\linewidth]{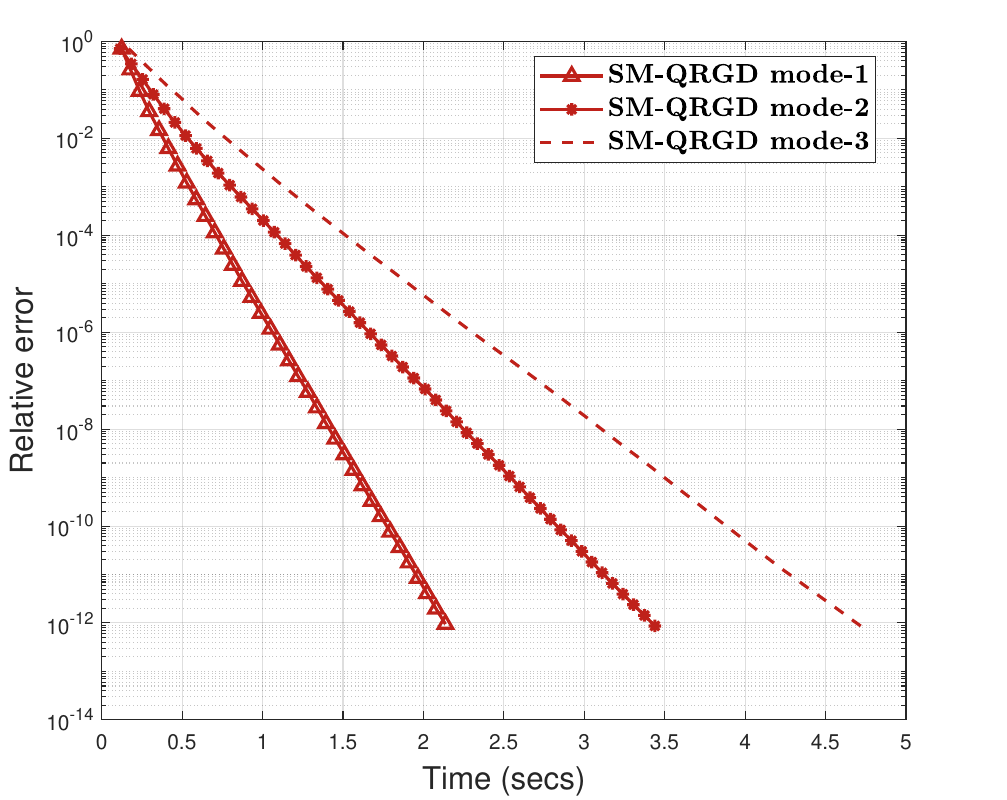}
		\label{subfig: diff rank CPU time vs error}}
	\end{minipage}
	\caption{Results for different modes tangent space projection with $n = 100, r_1 = 10, r_2 = 20, r_3 = 30$ and $\rho = 0.3$.}
	\label{fig: Different mode projection}
\end{figure}

\begin{figure}[htp]
	\begin{minipage}[t]{0.49\linewidth}
		\centering	
		\subfigure[Iteration v.s. Relative Error]{\includegraphics[width=.96\linewidth]{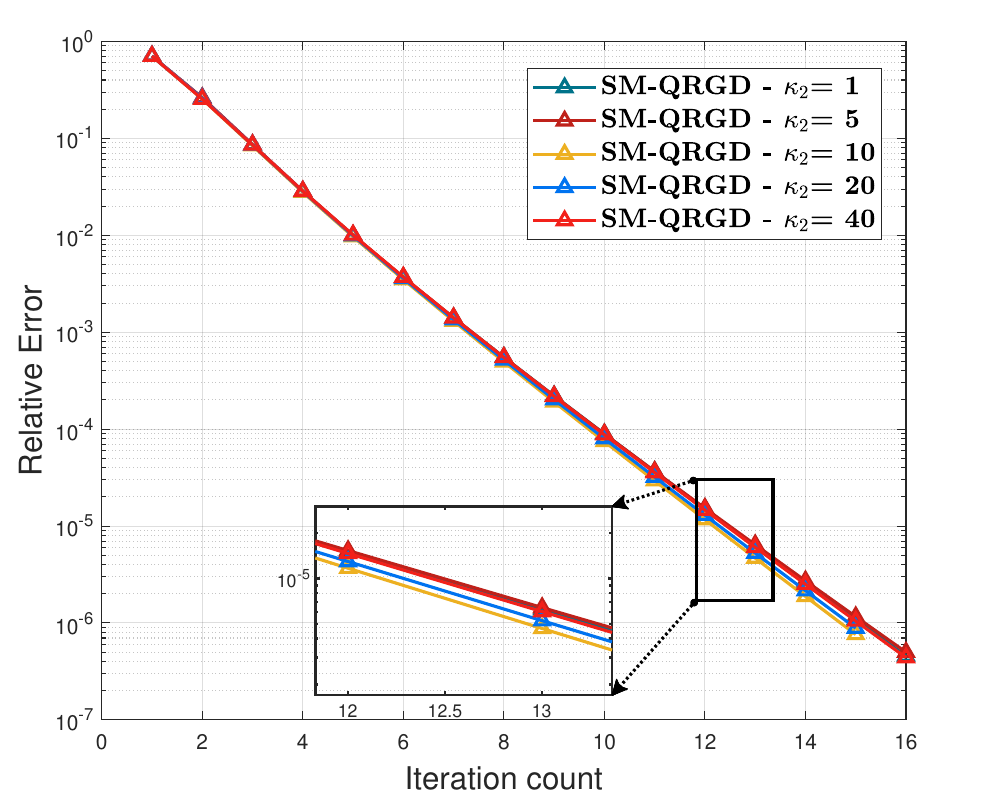}
		\label{subfig: diff cond num Iter vs error}}
	\end{minipage}
	\begin{minipage}[t]{0.49\linewidth}
		\centering
		\subfigure[Runtime v.s. Relative Error]{\includegraphics[width=.96\linewidth]{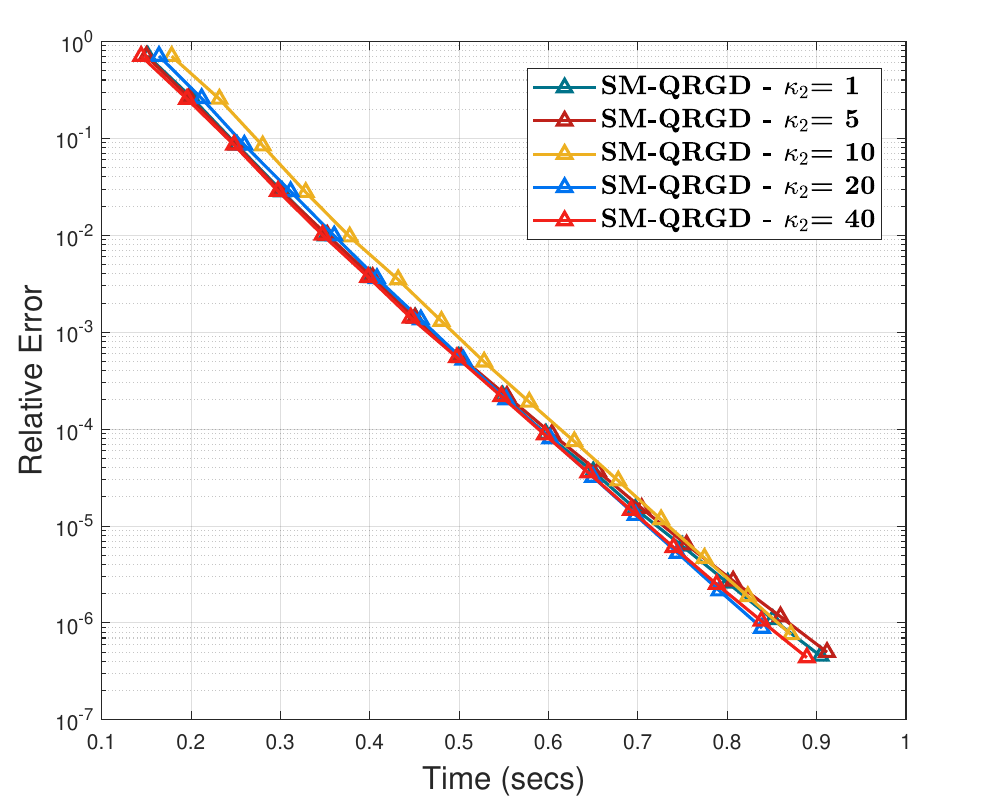}
		\label{subfig: diff cond num CPU time vs error}}
	\end{minipage}
	\caption{Results of SM-QRGD under different condition number}
	\label{fig: Different condition number}
\end{figure}

\textbf{Tangent space projection on different modes.} 
As discussed, the different mode selections in SM-QRGD can result in different performances.
Here, to assess its influence,  we consider a tensor $\bd{\mcal{T}}$ of size $\bb{R}^{n\times n\times n}$ with distinct ranks for each mode, i.e., $\mathbf{r} = (r_1, r_2, r_3)$ ($r_1< r_2< r_3$). With the same sampling operator $\scr{P}_\Omega$, we run $3$ different experiments by separately choosing modes $1$,$2$ and $3$ as tangent space to project, i.e., for mode $1$ (or $2, 3$) tangent space projection, the ordering for mode matrix SVD is $[1,2,3]$\ (or $[2,1,3],\ [3,1,2]$). The relative error \textit{v.s.} iteration counter and CPU time are presented in Fig. \ref{subfig: diff rank iter vs error} and \ref{subfig: diff rank CPU time vs error}, respectively. It can be observed that selecting a mode with a smaller rank can yield a faster convergence, which is consistent with our theoretical analysis (Theorem \ref{thm: alpha =1}, \ref{thm: SMQRGD}).

\textbf{SM-QRGD on tensor with different condition numbers.}
To assess the performance of SM-QRGD under different levels of ill-posedness, we experiment by varying the condition number of the tensor $\bd{\mcal{T}}$ (The condition number of a given mode is defined as the condition number of the corresponding matrixization). To simplify the presentation, we only consider the condition number of the second mode $\kappa_2$  while setting the other two modes $\kappa_1,\kappa_3$ fixed. As in \cite{tong2022scaling}, we generate a core tensor $\bd{\mcal{S}}\in \bb{R}^{r\times r\times r}$ by the formula  $\bd{\mcal{S}}(j_1, j_2, j_3) = \sigma_{j_2}/\sqrt{r}$ if $j_1 +j_2 + j_3 \equiv 0\ (\operatorname{mod} r)$. where $\sigma_{j_2}, j_2 = 1,\dots, r$ take values equispaced from $1$ to $1/\kappa_2$. Then, it can be verified that the condition numbers of the first and the third mode are equal to 1, and the magnitude of the second mode's condition number equals $\kappa_2$. The results are depicted in Fig. \ref{fig: Different condition number}. The SM-QRGD algorithm maintains stable performance across various sets of condition numbers.

\textbf{Comparison with other algorithms.} We compare our SM-QRGD algorithm with the SeMPIHT and RGD algorithms for solving the tensor completion problem. The three algorithms are compared with and without step size normalization. The test algorithms are terminated when the relative error $\mathbf{err}^k$ comes within the threshold $tol$ or the maximum iteration number $100$ is achieved. Results are presented in Fig. \ref{fig: Tensor completion} and  summarized as follows:

\begin{itemize}
\item Fig \ref{subfig: a} fixes $n = 300,\ r = 5,\ tol = 10^{-10},\  \rho = 0.3$ and plots the relative error \textit{v.s.} the CPU time of those three algorithms with constant and normalized step sizes. It can be observed that, for both choices of step size, SM-QRGD exhibits the fastest convergence.

\item Fig \ref{subfig: b} and Fig \ref{subfig: d} fix $n = 200, tol = 10^{-5}$ and plot the CPU time \textit{v.s.} the rank $r$ and sampling ratio $\rho$, respectively. It is shown that SM-QRGD and RGD are less sensitive to the sampling ratio $\rho$ and rank $r$, while SM-QRGD achieves the desired accuracy more rapidly.

\item Fig \ref{subfig: c} fixes $r = 5, tol = 10^{-5}, \rho = 0.5$ and varies the dimension $n$. The CPU time of both SM-QRGD and RGD remains relatively consistent as the dimension $n$ increases, and SM-QRGD stands out for slightly faster convergence compared to RGD.
\end{itemize}

\begin{figure}[H]
	\begin{minipage}[t]{0.49\linewidth}
		\centering
		\subfigure[Runtime v.s. Relative Error]{\includegraphics[width=.98\linewidth]{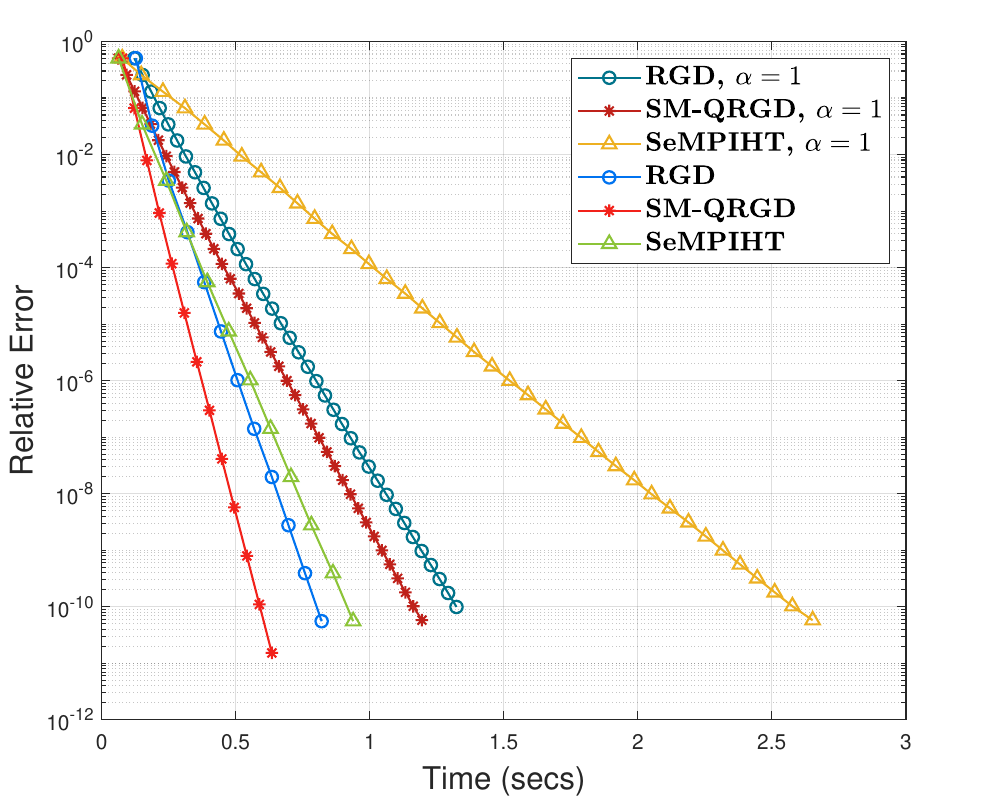}
		\label{subfig: a}}
	\end{minipage}
	\begin{minipage}[t]{0.49\linewidth}
		\centering
		\subfigure[Varying Rank v.s. Runtime]{\includegraphics[width=.98\linewidth]{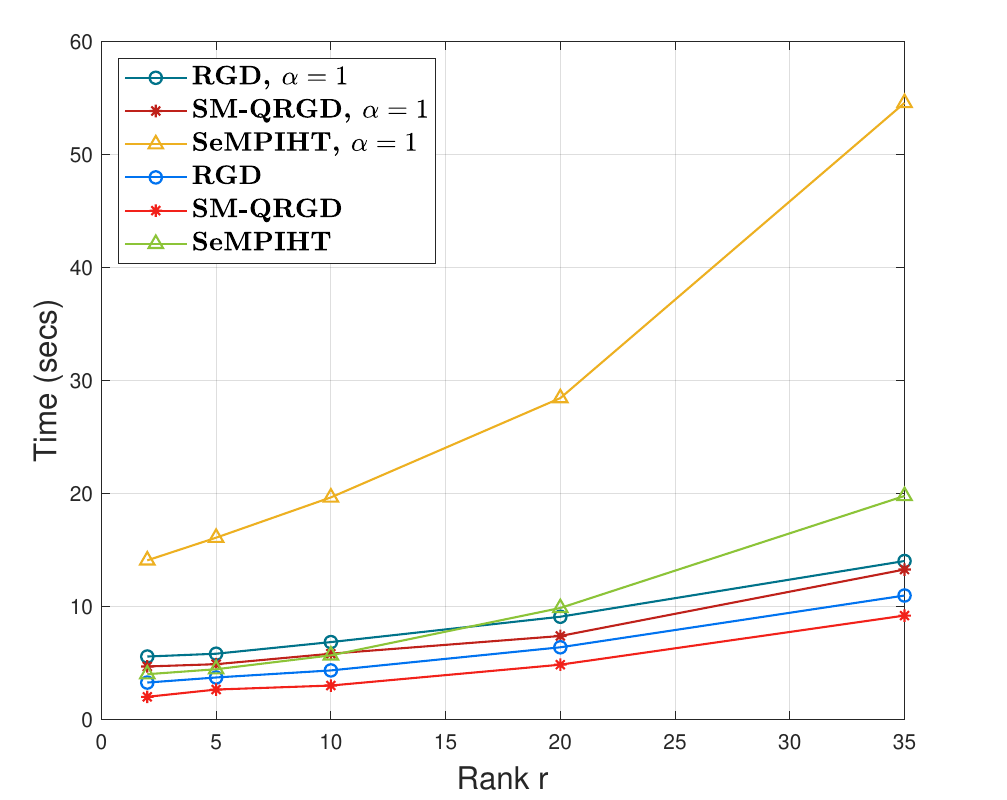}
		\label{subfig: b}}
	\end{minipage}
	
	\begin{minipage}[t]{0.49\linewidth}
		\centering
		\subfigure[Varying Dimension v.s. Runtime]{\includegraphics[width=.98\linewidth]{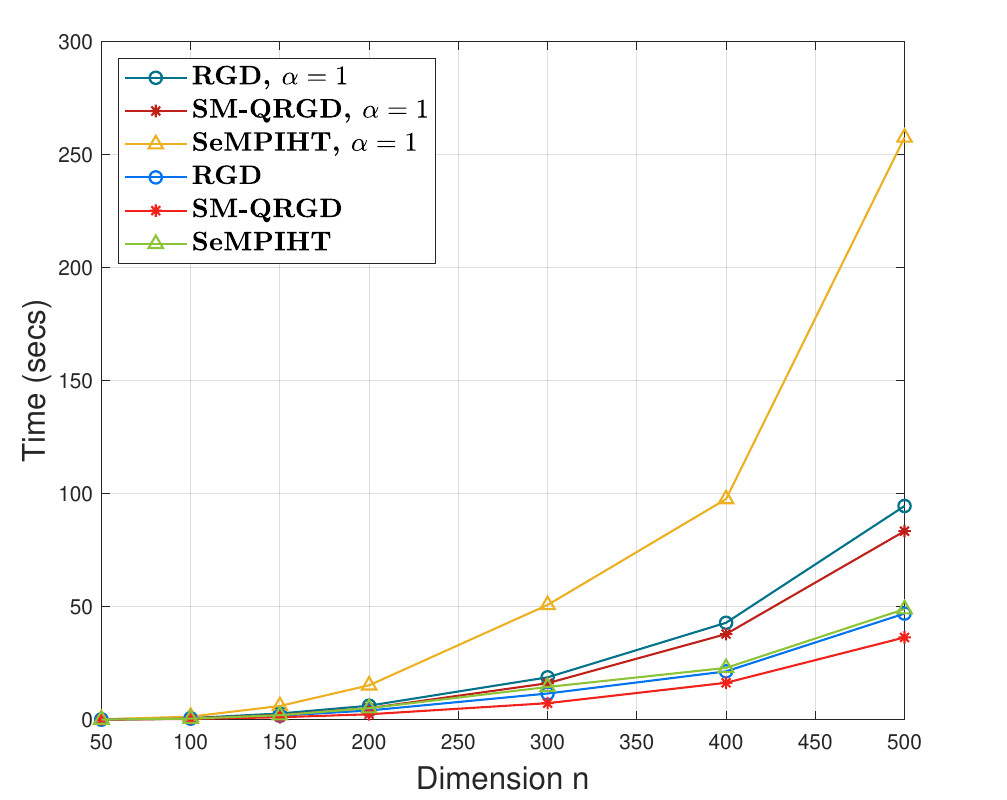}
		\label{subfig: c}}
	\end{minipage}
	\begin{minipage}[t]{0.49\linewidth}
		\centering
		\subfigure[Varying Sampling Ratio v.s. Runtime]{\includegraphics[width=.98\linewidth]{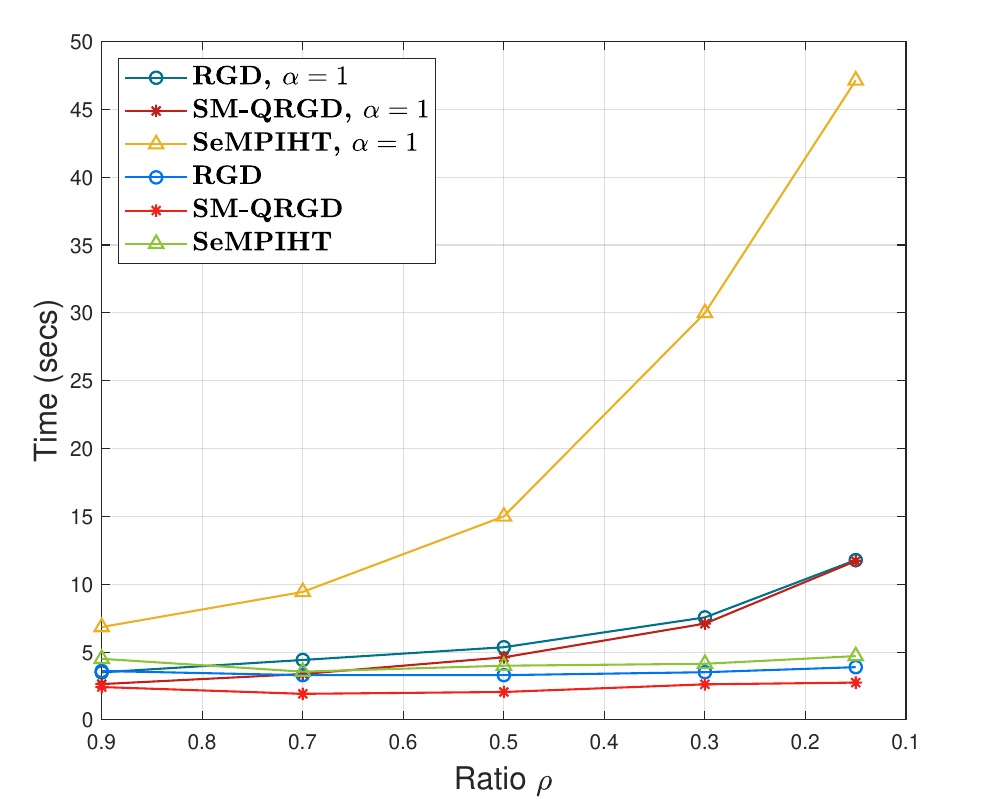}
		\label{subfig: d}}
	\end{minipage}
	\caption{Comparisons with other algorithms}
    \label{fig: Tensor completion}
\end{figure}

\section{Conclusion}\label{section: Discussion and Future Direction}
In this study, we introduced SM-QRGD, a novel algorithm designed for tensor recovery. The approach integrates a low-rank matrix tangent space projection with ST-HOSVD operations for the low-rank tensor set mapping, significantly reducing the computational complexity compared to the state-of-the-art algorithm. Building upon the TRIP assumption, we have established the convergence theory and introduced a recovery guarantee. Numerical results have demonstrated the superior performance of SM-QRGD over current methods.

\section{Appendix}\label{section: Proof}
\subsection{Computational Complexity Analysis}\label{Appendix: Computational complexity}
The complexity analysis is given as follows:
\begin{itemize}
    \item \textbf{Computation of}    $\scr{H}_{\mbf{r}}^{\text{ST}}\circ \scr{P}_{\hat{\bb{T}}}$.
    
    Denote $\scr{M}_1 (\bd{\hat{\mcal{X}}}) = \mbf{\hat{U}} \mbf{\hat{\Sigma}} \mbf{\hat{V}}^T$ where $\scr{M}_1 (\bd{\hat{\mcal{X}}})\in \bb{R}^{n\times n^{d-1}}, \mbf{\hat{U}}\in \bb{R}^{n\times r}$, $\mbf{\hat{\Sigma}}\in \bb{R}^{r\times r}$ and $\mbf{\hat{V}}\in \bb{R}^{n^{d-1}\times r}$. For given $d$-order tensor $\bd{\mcal{Z}}$, 
    \begin{equation*}
    \begin{aligned}
    \scr{P}_{\hat{T}}(\scr{M}_1(\bd{\mcal{Z}})) &= \mbf{\hat{U}} \mbf{\hat{U}}^T \scr{M}_1(\bd{\mcal{Z}}) + \scr{M}_1(\bd{\mcal{Z}}) \mbf{\hat{V}} \mbf{\hat{V}}^T - \mbf{\hat{U}} \mbf{\hat{U}}^T \scr{M}_1(\bd{\mcal{Z}}) \mbf{\hat{V}} \mbf{\hat{V}}^T\\
    & = \mbf{\hat{U}} \mbf{\hat{U}}^T \scr{M}_1(\bd{\mcal{Z}}) (\mbf{I}- \mbf{\hat{V}} \mbf{\hat{V}}^T) + (\mbf{I} - \mbf{\hat{U}} \mbf{\hat{U}}^T) \scr{M}_1(\bd{\mcal{Z}}) \mbf{\hat{V}} \mbf{\hat{V}}^T\\
    &\quad + \mbf{\hat{U}} \mbf{\hat{U}}^T \scr{M}_1(\bd{\mcal{Z}}) \mbf{\hat{V}} \mbf{\hat{V}}^T\\
    & := \mbf{\hat{U}} \mbf{Y}_2^T + \mbf{Y}_1 \mbf{\hat{V}}^T + \mbf{\hat{U}} \mbf{\hat{U}}^T \scr{M}_1(\bd{\mcal{Z}}) \mbf{\hat{V}} \mbf{\hat{V}}^T,
    \end{aligned}
\end{equation*}
    
  where $\mbf{Y}_1 = (\mbf{I} - \mbf{\hat{U}} \mbf{\hat{U}}^T) \scr{M}_1(\bd{\mcal{Z}}) \mbf{\hat{V}}\in \bb{R}^{n\times r}$ and $\mbf{Y}_2 = (\mbf{I}-\mbf{\hat{V}}\mbf{\hat{V}}^T)\scr{M}_1(\bd{\mcal{Z}})^T \mbf{\hat{U}}\in \bb{R}^{n^{d-1}\times r}$. Let $\mbf{Y}_1 = \mbf{Q}_1 \mbf{R}_1$ and $\mbf{Y}_2 = \mbf{Q}_2 \mbf{R}_2$ be the QR decompositions of $\mbf{Y}_1$ and $\mbf{Y}_2$, respectively. From these decompositions, we have $\mbf{\hat{U}}^T \mbf{Q}_1 = 0, \mbf{\hat{V}}^T \mbf{Q}_2 = 0$. The computation of $\mbf{Y}_1, \mbf{Y}_2$ requires $2(n^2 r+ n^d r + n^{d-1} r^2)$ flops and performing QR decompositions costs $O(n r^2 + n^{d-1} r^2)$ flops. Thus, the flops of this part are on the order of $2 n^d r + o(n^d r)$. 
    
Then
\begin{equation*}
    \begin{aligned}
    \scr{P}_{\hat{T}} (\scr{M}_1(\bd{\mcal{Z}}))& = \underbrace{\left[\begin{array}{cc}
    \mbf{\hat{U}} & \mbf{Q}_1 
\end{array}\right]}_{n\times (r + r)} \underbrace{\left[\begin{array}{cc}
    \mbf{\hat{U}}^T \scr{M}_{1}(\bd{\mcal{Z}}) \mbf{\hat{V}} &  \mbf{R}_2^T \\
    \mbf{R}_1 & 0
\end{array}\right]}_{(r + r)\times(r + r)} \underbrace{\left[\begin{array}{c}
     \mbf{\hat{V}}^T\\
     \mbf{Q}_2^T
\end{array}\right]}_{(r + r)\times n^{d-1}}\\
&:=\left[\begin{array}{cc}
    \mbf{\hat{U}} & \mbf{Q}_1 
\end{array}\right] \mbf{M} \left[\begin{array}{c}
     \mbf{\hat{V}}^T\\
     \mbf{Q}_2^T
\end{array}\right].
    \end{aligned}
    \end{equation*}
    
Since $\left[\begin{array}{cc}
    \mbf{\hat{U}} & \mbf{Q}_1 
\end{array}\right]$ and $ \left[\begin{array}{cc}
    \mbf{\hat{V}} & \mbf{Q}_2 
\end{array}\right]$ are both orthogonal matrices, the truncated SVD of $\scr{P}_{\hat{T}} (\scr{M}_1(\bd{\mcal{Z}}))$ can be obtained from truncated SVD of $\mbf{M}$. Denote the $r$-truncation of $\mbf{M}$ as $\scr{H}_{r}(\mbf{M}) = \mbf{U_M} \mbf{\Sigma_M} \mbf{V_M}^T$, requiring $O(r^3)$ flops. Therefore, 
for the computation of $\scr{H}^{\text{ST}}_{\mbf{r}} $ in Algorithm \ref{alg: ST-HOSVD},
\begin{equation*}
\begin{aligned}
\mbf{U}_1 &= \left[\begin{array}{cc}
    \mbf{\hat{U}} & \mbf{Q}_1 
\end{array}\right] \mbf{U_M}\\
\bd{\mcal{B}} &= \scr{M}_1^{-1}\left(\mbf{\Sigma_M} \mbf{V_M}^T \left[\begin{array}{c}
     \mbf{\hat{V}}^T\\
     \mbf{Q}_2^T
\end{array}\right]\right).
\end{aligned}
\end{equation*}
    
Here, the calculation of  $\mbf{U}_1$ involves the multiplication of a $n\times 2 r$ matrix and a $2 r\times r$ matrix, requiring $2 n r^2$ flops. While $\bd{\mcal{B}}$ can be obtained by first multiplying $\mbf{\Sigma_M}$ by $\mbf{V_M}^T$, requiring  $2 r^3$ flops, and then multiplying $(\mbf{\Sigma_M} \mbf{V_M}^T)$ by $\left[\begin{array}{c}
     \mbf{\hat{V}}^T\\
     \mbf{Q}_2^T
\end{array}\right]$, with additional $2 n^{d-1} r^2$ flops. Thus, the first mode calculation in Algorithm \ref{alg: ST-HOSVD} requires $2 (n^{d-1}r^2 + n r^2 + r^3) = o(n^d r)$ flops in total. For the other modes in Algorithm \ref{alg: ST-HOSVD}, we need $O(\sum\limits_{k = 2}^d n^{d+1-k} r^k)$ flops to obtain the factor matrices $\mbf{U}_i, i = 1,\dots, d$ and resize the tensor $\bd{\mcal{B}}$. And, it requires $n^{d} r + o(n^d r)$ flops to obtain the composition $\scr{H}_{\mbf{r}}^{\text{ST}}\circ\scr{P}_{\hat{\bb{T}}}(\bd{\mcal{Z}}) = \bd{\mcal{B}}\times_1 \mbf{U}_1\times_2\cdots\times_d \mbf{U}_d$.

\vskip 2mm
In summary, the overall computational complexity of $\scr{H}_{\mbf{r}}^{\text{ST}}\circ\scr{P}_{\hat{\bb{T}}}$ is on the order of $3 n^d r + o(n^d r)$ and the highest order comes from the matrix multiplication which is easy to speed up by parallel computing.  On the contrary, directly computing $\scr{H}_{\mbf{r}}^{\text{ST}}(\bd{\mcal{Z}})$ involves the computation of the truncated-$r$ SVD of $n\times n^{d-1}$ matrix and the final tensor product $\bd{\mcal{B}}\times_1 \mbf{U}_1\times_2\cdots\times_d \mbf{U}_d$, it typically costs $O(n^d r)$ flops, but with a large hidden constant in front of $n^d r$.

\item \textbf{Computation of} $\alpha_k$.

The computational cost of $\alpha_k$ depends on the computation of $\scr{P}_{\hat{\bb{T}}}(\bd{\mcal{G}})$, as defined in \eqref{equ: tangent space projection}
\begin{equation*}
 \begin{aligned}
\scr{P}_{\hat{T}}(\scr{M}_1(\bd{\mcal{G}})) &= \mbf{\hat{U}} \mbf{\hat{U}}^T \scr{M}_1(\bd{\mcal{G}}) + \scr{M}_1(\bd{\mcal{G}}) \mbf{\hat{V}} \mbf{\hat{V}}^T - \mbf{\hat{U}} \mbf{\hat{U}}^T \scr{M}_1(\bd{\mcal{G}}) \mbf{\hat{V}} \mbf{\hat{V}}^T\\
& = \mbf{\hat{U}} \mbf{\hat{U}}^T \scr{M}_1(\bd{\mcal{G}}) + \left(\scr{M}_1(\bd{\mcal{G}}) - \mbf{\hat{U}} \mbf{\hat{U}}^T \scr{M}_1(\bd{\mcal{G}})\right)\mbf{\hat{V}} \mbf{\hat{V}}^T.
\end{aligned}
\end{equation*}
    
To compute $\scr{P}_{\hat{\bb{T}}}(\bd{\mcal{G}})$ with the lowest cost, for $\mbf{\hat{U}} \mbf{\hat{U}}^T \scr{M}_1(\bd{\mcal{G}})$, we can first compute $\mbf{\hat{U}}^T \scr{M}_1(\bd{\mcal{G}})$, which involves a matrix multiplication between an $r\times n$ matrix and an $n\times n^{d-1}$ matrix thus it requires $n^d r$ flops, then we compute $\mbf{\hat{U}} (\mbf{\hat{U}}^T \scr{M}_1(\bd{\mcal{G}}))$, which also requires $n^d r$ flops. While the subtraction $\left(\scr{M}_1(\bd{\mcal{G}}) - \mbf{\hat{U}} \mbf{\hat{U}}^T \scr{M}_1(\bd{\mcal{G}})\right)$ can be computed with a matrix subtraction operation. After that, we compute $\left(\scr{M}_1(\bd{\mcal{G}}) - \mbf{\hat{U}} \mbf{\hat{U}}^T \scr{M}_1(\bd{\mcal{G}})\right) \mbf{\hat{V}}$, it involves a matrix multiplication between an $n\times n^{d-1}$ matrix and an $n^{d-1}\times r$ matrix, requiring $n^d r$ flops. Finally, the computation of $\left(\scr{M}_1(\bd{\mcal{G}}) - \mbf{\hat{U}}\mbf{\hat{U}}^T \scr{M}_1(\bd{\mcal{G}})\right) \mbf{\hat{V}} \mbf{\hat{V}}^T$ requires additional $n^d r$ flops. Hence, the computation of $\scr{P}_{\hat{\bb{T}}}(\bd{\mcal{G}})$ requires $4n^d r + o(n^d r)$ flops. Based on that, both $\bnm\scr{P}_{\hat{\bb{T}}}(\bd{\mcal{G}})\bnm_F^2$ and $\bnm\scr{A}\scr{P}_{\hat{\bb{T}}}(\bd{\mcal{G}})\bnm_F^2$ require $O(n^d)$ flops. Therefore, the computation of $\alpha_k$ requires $4n^d r + o(n^d r)$ flops in total.
\end{itemize}
\vskip 2mm
In NIHT and SeMPIHT, the computation of the stepsize $\alpha$ depends on $\scr{F}(\bd{\mcal{G}})$ in \eqref{equ: IHT stepsize}, which involves the calculation of $\bd{\mcal{G}}\times_{i\in [d]}(\mbf{U}_i \mbf{U}_i^T)$. To implement this efficiently, one can first compute $\bd{\mcal{G}}\times_{i\in [d]}\mbf{U}_i^T$, and then compute $(\bd{\mcal{G}}\times_{i\in [d]}\mbf{U}_i^T)\times_{i\in [d]}\mbf{U}_i$. Thus, the overall computational cost for computing $\alpha_k$ is $2n^d r + o(n^d r)$. The computational complexity of RGD is analyzed in \cite{cai2020provable}. 

\subsection{Proofs of Main Results}
\subsubsection{Useful Lemmas}
Before proceeding to the main results, we first introduce some useful lemmas.

\begin{lemma}\label{lemma: tsp bound}
   \cite{wei2016guarantees} Let $\mbf{\hat{X}}=\mbf{\hat{U}} \mbf{\hat{\Sigma}} \mbf{\hat{V}}^T$ and $\mbf{X}= \mbf{U} \mbf{\Sigma} \mbf{V}^T$ be two rank $r$ matrices, then the following four inequalities hold
    \begin{equation*}
    \bnm\mbf{\hat{U}} \mbf{\hat{U}}^T-\mbf{U} \mbf{U}^T\bnm_2 \leq \frac{\bnm\mbf{\hat{X}}-\mbf{X}\bnm_2}{\sigma_{r}(\mbf{X})} \quad \text { and } \quad\bnm\mbf{\hat{V}} \mbf{\hat{V}}^T-\mbf{V} \mbf{V}^T\bnm_2 \leq \frac{\bnm\mbf{\hat{X}}-\mbf{X}\bnm_2}{\sigma_{r}(\mbf{X})};
    \end{equation*}
    \begin{equation*}
    \bnm\mbf{\hat{U}} \mbf{\hat{U}}^T-\mbf{U} \mbf{U}^T\bnm_F \leq \frac{\sqrt{2}\bnm\mbf{\hat{X}}-\mbf{X}\bnm_F}{\sigma_{r}(\mbf{X})} \quad \text { and } \quad\bnm\mbf{\hat{V}} \mbf{\hat{V}}^*-\mbf{V} \mbf{V}^*\bnm_F \leq \frac{\sqrt{2}\bnm\mbf{\hat{X}}-\mbf{X}\bnm_F}{\sigma_{r}(\mbf{X})}.
    \end{equation*}
    
where $\sigma_r(\mbf{X})$ denotes the $r^{\mathrm{th}}$ singular value of $\mbf{X}$.
\end{lemma}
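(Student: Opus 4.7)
The plan is to establish a clean identity reducing the projection-difference norms to one-sided ``cross'' quantities of the form $\|(I - \mbf{\hat{U}} \mbf{\hat{U}}^T) \mbf{U}\|$, and then bound those cross quantities directly using the SVDs of $\mbf{X}$ and $\mbf{\hat{X}}$. The argument splits into a purely geometric step and a short perturbation computation; symmetry in $\mbf{X}\leftrightarrow\mbf{X}^T$ then delivers the two $\mbf{V}\mbf{V}^T-\mbf{\hat{V}}\mbf{\hat{V}}^T$ bounds for free.

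First I would prove, for any two $n\times r$ matrices $\mbf{U},\mbf{\hat{U}}$ with orthonormal columns, the identities
$$\|\mbf{U} \mbf{U}^T - \mbf{\hat{U}} \mbf{\hat{U}}^T\|_2 = \|(I - \mbf{\hat{U}} \mbf{\hat{U}}^T) \mbf{U}\|_2, \quad \|\mbf{U} \mbf{U}^T - \mbf{\hat{U}} \mbf{\hat{U}}^T\|_F = \sqrt{2}\, \|(I - \mbf{\hat{U}} \mbf{\hat{U}}^T) \mbf{U}\|_F.$$
The Frobenius case is a one-line trace computation: expanding $\|\mbf{U} \mbf{U}^T - \mbf{\hat{U}} \mbf{\hat{U}}^T\|_F^2 = 2r - 2\operatorname{tr}(\mbf{U}^T \mbf{\hat{U}} \mbf{\hat{U}}^T \mbf{U})$ and $\|(I - \mbf{\hat{U}} \mbf{\hat{U}}^T) \mbf{U}\|_F^2 = r - \operatorname{tr}(\mbf{U}^T \mbf{\hat{U}} \mbf{\hat{U}}^T \mbf{U})$ yields the factor $\sqrt{2}$ immediately. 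For the spectral case I would take the SVD $\mbf{U}^T \mbf{\hat{U}} = \mbf{W}_1 \cos\Theta\, \mbf{W}_2^T$ with $\Theta=\operatorname{diag}(\theta_1,\ldots,\theta_r)$ the principal angles between the two subspaces; a direct computation shows that the singular values of $(I - \mbf{\hat{U}} \mbf{\hat{U}}^T) \mbf{U}$ are exactly $\{\sin\theta_i\}$, and decomposing $\mbf{U}\mbf{U}^T - \mbf{\hat{U}}\mbf{\hat{U}}^T$ along $\operatorname{range}(\mbf{U})\oplus\operatorname{range}(\mbf{U})^\perp$ shows that its spectral norm is also $\max_i \sin\theta_i$.

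Second I would derive an explicit formula for the cross term in terms of $\mbf{X} - \mbf{\hat{X}}$. Since the column space of $\mbf{\hat{X}}$ is spanned by $\mbf{\hat{U}}$, we have $(I - \mbf{\hat{U}} \mbf{\hat{U}}^T) \mbf{\hat{X}} = 0$, so $(I - \mbf{\hat{U}} \mbf{\hat{U}}^T) \mbf{X} = (I - \mbf{\hat{U}} \mbf{\hat{U}}^T)(\mbf{X} - \mbf{\hat{X}})$. Using $\mbf{X} \mbf{V} = \mbf{U} \mbf{\Sigma}$ (valid because $\mbf{X}$ has exact rank $r$, so $\mbf{\Sigma}$ is invertible) and multiplying on the right by $\mbf{\Sigma}^{-1}$ gives
$$(I - \mbf{\hat{U}} \mbf{\hat{U}}^T) \mbf{U} = (I - \mbf{\hat{U}} \mbf{\hat{U}}^T)(\mbf{X} - \mbf{\hat{X}}) \mbf{V} \mbf{\Sigma}^{-1}.$$
Taking spectral or Frobenius norms, and bounding the projection $(I - \mbf{\hat{U}} \mbf{\hat{U}}^T)$ and $\mbf{V}$ by spectral norm $1$ while $\|\mbf{\Sigma}^{-1}\|_2 = 1/\sigma_r(\mbf{X})$, delivers $\|(I - \mbf{\hat{U}} \mbf{\hat{U}}^T) \mbf{U}\|_2 \le \|\mbf{X} - \mbf{\hat{X}}\|_2/\sigma_r(\mbf{X})$ and its Frobenius analogue. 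Combining with the identity from the first step produces the two inequalities for $\mbf{U}\mbf{U}^T - \mbf{\hat{U}}\mbf{\hat{U}}^T$; applying the same argument to $\mbf{X}^T$ and $\mbf{\hat{X}}^T$ produces those for $\mbf{V}\mbf{V}^T - \mbf{\hat{V}}\mbf{\hat{V}}^T$.

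The only mildly delicate point is the spectral-norm half of the identity in the first step, which genuinely uses that $\mbf{U}$ and $\mbf{\hat{U}}$ have the \emph{same} rank $r$; with unequal ranks the two cross norms $\|(I - P_2)P_1\|_2$ and $\|(I - P_1) P_2\|_2$ need not coincide with each other or with $\|P_1-P_2\|_2$. Everything else is routine SVD manipulation, and invertibility of $\mbf{\Sigma}$ (i.e., $\sigma_r(\mbf{X})>0$) is what makes the denominator $\sigma_r(\mbf{X})$ appear naturally.
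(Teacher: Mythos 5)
Your proposal is correct. Note that the paper itself gives no proof of this lemma --- it is quoted verbatim from \cite{wei2016guarantees} --- so there is no internal argument to compare against; your derivation is essentially the standard one from that reference: reduce $\bnm\mbf{U}\mbf{U}^T-\mbf{\hat{U}}\mbf{\hat{U}}^T\bnm$ to the one-sided term $\bnm(\mbf{I}-\mbf{\hat{U}}\mbf{\hat{U}}^T)\mbf{U}\bnm$ (exact identity with factor $1$ in spectral norm and $\sqrt{2}$ in Frobenius norm, both of which you verify correctly via principal angles and the trace expansion), then use $(\mbf{I}-\mbf{\hat{U}}\mbf{\hat{U}}^T)\mbf{X}=(\mbf{I}-\mbf{\hat{U}}\mbf{\hat{U}}^T)(\mbf{X}-\mbf{\hat{X}})$ together with $\mbf{U}=\mbf{X}\mbf{V}\mbf{\Sigma}^{-1}$, and transpose for the $\mbf{V}$-bounds. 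You also correctly flag the only delicate point, namely that the spectral-norm identity uses that both subspaces have the same dimension $r$, which the hypothesis of exact rank $r$ guarantees.
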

\begin{lemma}\label{lemma: tsp error}
    Let $\mbf{\hat{X}}=\mbf{\hat{U}} \mbf{\hat{\Sigma}} \mbf{\hat{V}}^T$ be a rank $r$ matrix with $\hat{T}$ be its tangent space, and let $\mbf{X}$ be another matrix with rank $r$, then
   \begin{equation*}
    \bnm\left(\scr{I}-\scr{P}_{\hat{T}}\right) \mbf{X}\bnm_F  \leq \frac{1}{\sigma_{r }(\mbf{X})}\bnm\mbf{\hat{X}}-\mbf{X}\bnm_F^2.
    \end{equation*}
    
\end{lemma}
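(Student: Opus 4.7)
The plan is to expand the residual projection explicitly and then exploit that both $\mbf{X}$ and $\mbf{\hat X}$ are rank $r$. First I would substitute the tangent space projection formula \eqref{equ: tangent space projection} into $(\scr{I} - \scr{P}_{\hat T})\mbf{X}$ and rearrange to obtain the compact product form
\begin{equation*}
(\scr{I} - \scr{P}_{\hat T})\mbf{X} = (\mbf{I} - \mbf{\hat U}\mbf{\hat U}^T)\,\mbf{X}\,(\mbf{I} - \mbf{\hat V}\mbf{\hat V}^T),
\end{equation*}
which is the standard identity for the orthogonal complement of bilinear projectors of this form.

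Next I would manipulate the two outer factors asymmetrically to bring in $\mbf{X} - \mbf{\hat X}$ on one side and a rank-$r$ projector difference on the other. Writing $\mbf{X} = \mbf U \mbf\Sigma \mbf V^T$, since $\mbf{X}\mbf V\mbf V^T = \mbf{X}$, the right factor simplifies as $\mbf{X}(\mbf{I} - \mbf{\hat V}\mbf{\hat V}^T) = \mbf{X}(\mbf V\mbf V^T - \mbf{\hat V}\mbf{\hat V}^T)$. On the left, since $\mbf{\hat X} = \mbf{\hat U}\mbf{\hat\Sigma}\mbf{\hat V}^T$ satisfies $(\mbf{I} - \mbf{\hat U}\mbf{\hat U}^T)\mbf{\hat X} = 0$, one may subtract a zero to get $(\mbf{I} - \mbf{\hat U}\mbf{\hat U}^T)\mbf{X} = (\mbf{I} - \mbf{\hat U}\mbf{\hat U}^T)(\mbf{X} - \mbf{\hat X})$. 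Combining both substitutions yields the key identity
\begin{equation*}
(\scr{I} - \scr{P}_{\hat T})\mbf{X} = (\mbf{I} - \mbf{\hat U}\mbf{\hat U}^T)(\mbf{X} - \mbf{\hat X})(\mbf V\mbf V^T - \mbf{\hat V}\mbf{\hat V}^T).
\end{equation*}

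Finally I would take the Frobenius norm of both sides and apply the submultiplicative inequality $\|\mbf{A}\mbf{B}\mbf{C}\|_F \leq \|\mbf{A}\|_2\|\mbf{B}\|_F\|\mbf{C}\|_2$, together with $\|\mbf{I} - \mbf{\hat U}\mbf{\hat U}^T\|_2 = 1$ and Lemma \ref{lemma: tsp bound}, which controls $\|\mbf V\mbf V^T - \mbf{\hat V}\mbf{\hat V}^T\|_2 \leq \|\mbf X - \mbf{\hat X}\|_2/\sigma_r(\mbf X) \leq \|\mbf X - \mbf{\hat X}\|_F/\sigma_r(\mbf X)$. Chaining these inequalities produces the stated quadratic bound. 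There is no serious obstacle; the only subtle point is recognizing that one must treat the two outer factors asymmetrically—converting the right factor via $\mbf{X}\mbf V\mbf V^T = \mbf{X}$ and the left factor via $(\mbf{I} - \mbf{\hat U}\mbf{\hat U}^T)\mbf{\hat X} = 0$—since doing the same substitution on both sides would waste one of the small quantities and yield only a linear bound in $\|\mbf X - \mbf{\hat X}\|_F$ instead of the quadratic one needed.
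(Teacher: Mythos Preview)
Your proof is correct and follows essentially the same approach as the paper. The only cosmetic difference is a left/right symmetry: the paper arrives at the factorization $(\mbf{U}\mbf{U}^T-\mbf{\hat U}\mbf{\hat U}^T)(\mbf{X}-\mbf{\hat X})(\mbf{I}-\mbf{\hat V}\mbf{\hat V}^T)$ (projector difference on the left, complementary projector on the right), whereas you obtain the mirror image $(\mbf{I}-\mbf{\hat U}\mbf{\hat U}^T)(\mbf{X}-\mbf{\hat X})(\mbf{V}\mbf{V}^T-\mbf{\hat V}\mbf{\hat V}^T)$; both invoke Lemma~\ref{lemma: tsp bound} on the projector-difference factor and bound the complementary projector by $1$ in spectral norm.
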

\begin{proof}
Denote the tangent space of $\mbf{X}$ as $T$, since $\scr{P}_{T}(\mbf{X}) = \mbf{X}$, then by the definition of tangent space projection formula in \eqref{equ: tangent space projection}, one has
    \begin{equation*}
    \begin{aligned}
     \bnm\left(\scr{I}-\scr{P}_{\hat{T}}\right) \mbf{X}\bnm_F & = \bnm\left(\scr{P}_{T}-\scr{P}_{\hat{T}}\right) \mbf{X}\bnm_F\\
    & = \bnm\mbf{U} \mbf{U}^T \mbf{X} -(\mbf{\hat{U}}\mbf{\hat{U}}^T \mbf{X} + \mbf{X} \mbf{\hat{V}} \mbf{\hat{V}}^T - \mbf{\hat{U}}\mbf{\hat{U}}^T \mbf{X} \mbf{\hat{V}} \mbf{\hat{V}}^T)\bnm_F\\
    & = \bnm(\mbf{U} \mbf{U}^T - \mbf{\hat{U}}\mbf{\hat{U}}^T)\mbf{X} - (\mbf{U} \mbf{U}^T \mbf{X}\mbf{\hat{V}} \mbf{\hat{V}}^T - \mbf{\hat{U}}\mbf{\hat{U}}^T \mbf{X} \mbf{\hat{V}} \mbf{\hat{V}}^T) \bnm_F\\
    & = \bnm(\mbf{U} \mbf{U}^T - \mbf{\hat{U}}\mbf{\hat{U}}^T)\mbf{X}(\mbf{I} - \mbf{\hat{V}}\mbf{\hat{V}}^T)\bnm_F\\
    & = \bnm(\mbf{U} \mbf{U}^T - \mbf{\hat{U}}\mbf{\hat{U}}^T)(\mbf{X} - \mbf{\hat{X}})(\mbf{I} - \mbf{\hat{V}}\mbf{\hat{V}}^T)\bnm_F \\
    & \leq \bnm\mbf{U} \mbf{U}^T - \mbf{\hat{U}}\mbf{\hat{U}}^T\bnm_2\bnm\mbf{X} - \mbf{\hat{X}}\bnm_F\bnm\mbf{I} - \mbf{\hat{V}}\mbf{\hat{V}}^T\bnm_2 \\
    &\leq \frac{1}{\sigma_{r }(\mbf{X})}\bnm\mbf{\hat{X}}-\mbf{X}\bnm_2\bnm\mbf{\hat{X}}-\mbf{X}\bnm_F \\
    & \leq \frac{1}{\sigma_{r}(\mbf{X})}\bnm\mbf{\hat{X}}-\mbf{X}\bnm_F^2,
    \end{aligned}
    \end{equation*}
    where the second equality follows from the fact $\mbf{U} \mbf{U}^T \mbf{X} = \mbf{X}$ and \eqref{equ: tangent space projection}. The last equality follows from the fact $\mbf{\hat{X}}(\mbf{I}- \mbf{\hat{V}}\mbf{\hat{V}}^T) = 0$. The second inequality follows from Lemma \ref{lemma: tsp bound} and the fact that  $\bnm\mbf{I} - \mbf{\hat{V}}\mbf{\hat{V}}^T\bnm_2\leq 1$. The last inequality follows from the fact that $\bnm\mbf{Z}\bnm_2\leq \bnm\mbf{Z}\bnm_F$ for given matrix $\mbf{Z}$.
\end{proof}

\begin{corollary}\label{cor: tsp error}
The distance before and after mode-1 tangent space projection of a tensor can be estimated as follows:
\begin{equation*}
    \begin{aligned}
\bnm(\scr{I} -  \scr{P}_{\hat{\bb{T}}^k})(\bd{\mcal{Z}})\bnm_F  &= \bnm(\scr{I} -  \scr{P}_{\hat{{T}}^k})(\scr{M}_1(\bd{\mcal{Z}}))\bnm_F \\
& \leq {\frac{1}{\sigma_{r_1 }(\scr{M}_1(\bd{\mcal{Z}}))} \bnm\scr{M}_1(\bd{\hat{\mcal{X}}}^k) - \scr{M}_1(\bd{\mcal{Z}})\bnm_F^2}\\
& = \frac{1}{\sigma_{r_1 }(\scr{M}_1(\bd{\mcal{Z}}))} \bnm\bd{\hat{\mcal{X}}}^k - \bd{\mcal{Z}}\bnm_F^2.
\end{aligned}
\end{equation*}

\end{corollary}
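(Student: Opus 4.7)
The plan is to read off the corollary as a direct application of Lemma \ref{lemma: tsp error} after exploiting the isometry of the mode-1 matricization operator. Recall that $\scr{P}_{\hat{\bb{T}}^k} := \scr{M}_1^{-1}\circ\scr{P}_{\hat{T}^k}\circ \scr{M}_1$. Since mode-1 matricization merely rearranges tensor entries into matrix form, it is a linear bijection between $\mathbb{R}^{n_1\times\cdots\times n_d}$ and $\mathbb{R}^{n_1\times \prod_{i\ge 2}n_i}$ that preserves the Frobenius norm: $\bnm\bd{\mcal{Y}}\bnm_F=\bnm\scr{M}_1(\bd{\mcal{Y}})\bnm_F$ for every tensor $\bd{\mcal{Y}}$. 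Applying this to $\bd{\mcal{Y}}=(\scr{I}-\scr{P}_{\hat{\bb{T}}^k})(\bd{\mcal{Z}})$ and commuting $\scr{M}_1$ past $\scr{P}_{\hat{\bb{T}}^k}$ yields the first equality of the corollary.

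For the middle inequality, I would invoke Lemma \ref{lemma: tsp error} with $\mbf{\hat{X}}=\scr{M}_1(\bd{\hat{\mcal{X}}}^k)$ and $\mbf{X}=\scr{M}_1(\bd{\mcal{Z}})$. By construction, $\bd{\hat{\mcal{X}}}^k=\scr{H}_{r_1}(\bd{\mcal{W}}^k)$ so $\scr{M}_1(\bd{\hat{\mcal{X}}}^k)$ is a rank-$r_1$ matrix whose SVD factors $\mbf{U}_1^k,\mbf{V}_1^k$ generate precisely the tangent space $\hat{T}^k$ defined in \eqref{equ: low-rank tangent space}; thus Lemma \ref{lemma: tsp error} applies verbatim and produces the bound $\frac{1}{\sigma_{r_1}(\scr{M}_1(\bd{\mcal{Z}}))}\bnm\scr{M}_1(\bd{\hat{\mcal{X}}}^k)-\scr{M}_1(\bd{\mcal{Z}})\bnm_F^2$. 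The final equality is again the Frobenius-norm-preserving property of $\scr{M}_1$, applied to $\bd{\hat{\mcal{X}}}^k-\bd{\mcal{Z}}$.

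There is essentially no obstacle beyond bookkeeping; the only subtle point is that Lemma \ref{lemma: tsp error} is stated under a rank-$r_1$ hypothesis on $\mbf{X}$, while the corollary applies to arbitrary $\bd{\mcal{Z}}$. In the regime in which the corollary will be used later (with $\bd{\mcal{Z}}$ of mode-1 rank at least $r_1$), $\sigma_{r_1}(\scr{M}_1(\bd{\mcal{Z}}))$ is positive, and the proof of Lemma \ref{lemma: tsp error} itself only needs the existence of a rank-$r_1$ truncation $\mbf{U}\mbf{U}^T\mbf{X}$; the key identity $(\mbf{U}\mbf{U}^T-\mbf{\hat{U}}\mbf{\hat{U}}^T)\mbf{X}(\mbf{I}-\mbf{\hat{V}}\mbf{\hat{V}}^T)=(\mbf{U}\mbf{U}^T-\mbf{\hat{U}}\mbf{\hat{U}}^T)(\mbf{X}-\mbf{\hat{X}})(\mbf{I}-\mbf{\hat{V}}\mbf{\hat{V}}^T)$ still holds, so the same chain of inequalities carries through. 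I would briefly note this in the write-up to justify the direct quotation.
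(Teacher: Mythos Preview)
Your proposal is correct and matches the paper's approach: the corollary is stated there without proof, as an immediate consequence of Lemma~\ref{lemma: tsp error} via the Frobenius-norm isometry of $\scr{M}_1$ and the definition $\scr{P}_{\hat{\bb{T}}^k}=\scr{M}_1^{-1}\circ\scr{P}_{\hat{T}^k}\circ\scr{M}_1$. Your observation about the rank-$r_1$ hypothesis on $\mbf{X}$ is a fair caveat; in the paper the corollary is only ever invoked with $\bd{\mcal{Z}}=\bd{\mcal{T}}$, whose mode-1 unfolding has rank exactly $r_1$, so the hypothesis of Lemma~\ref{lemma: tsp error} is met there without further argument.
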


\vskip 3mm

\begin{lemma}\label{lemma: substitution error}
For the iterates $\bd{\mcal{X}}^k$, $\bd{\hat{\mcal{X}}}^k $ and $\bd{\mcal{W}}^k$ in Algorithm \ref{alg: SMQRGD}. The following inequalities hold
\begin{equation*}
    \begin{aligned}
\bnm\bd{\mcal{X}}^k - \bd{\mcal{T}}\bnm_F&\leq (\sqrt{d}+1) \bnm\bd{\mcal{W}}^k- \bd{\mcal{T}}\bnm_F; \\
\bnm\bd{\hat{\mcal{X}}}^k - \bd{\mcal{T}}\bnm_F&\leq 2 \bnm\bd{\mcal{W}}^k- \bd{\mcal{T}}\bnm_F;\\
\bnm\bd{\hat{\mcal{X}}}^k - \bd{\mcal{X}}^k\bnm_F&\leq (\sqrt{d}+1)\bnm\bd{\mcal{W}}^k- \bd{\mcal{T}}\bnm_F.
\end{aligned}
\end{equation*}

\end{lemma}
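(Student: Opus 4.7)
The plan is to prove all three inequalities by essentially the same two-line argument: insert $\bd{\mcal{W}}^k$ as an intermediate point via the triangle inequality, then bound the distance from $\bd{\mcal{W}}^k$ to the relevant thresholded iterate by using a best-approximation-style comparison against $\bd{\mcal{T}}$ (which has multilinear rank $\mathbf{r}$, and in particular mode-1 rank $r_1$).

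For the first bound, I would write
\begin{equation*}
\bnm\bd{\mcal{X}}^k - \bd{\mcal{T}}\bnm_F \leq \bnm\bd{\mcal{X}}^k - \bd{\mcal{W}}^k\bnm_F + \bnm\bd{\mcal{W}}^k - \bd{\mcal{T}}\bnm_F,
\end{equation*}
and then invoke Proposition \ref{prop: quasipro} (the quasi-projection property of ST-HOSVD with constant $\sqrt{d}$) together with the fact that $\bd{\mcal{T}}$ has multilinear rank $\mathbf{r}$, so $\bd{\mcal{T}}$ is a feasible competitor for $\scr{P}_\Theta(\bd{\mcal{W}}^k)$ and hence $\bnm\bd{\mcal{X}}^k - \bd{\mcal{W}}^k\bnm_F \leq \sqrt{d}\,\bnm\bd{\mcal{W}}^k - \bd{\mcal{T}}\bnm_F$. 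Combining gives the $(\sqrt{d}+1)$ factor.

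For the second bound I would use the same triangle inequality with $\bd{\hat{\mcal{X}}}^k$ in place of $\bd{\mcal{X}}^k$. The key observation is that $\bd{\hat{\mcal{X}}}^k = \scr{H}_{r_1}(\bd{\mcal{W}}^k)$ is, by construction, the best rank-$r_1$ approximation of $\scr{M}_1(\bd{\mcal{W}}^k)$ in Frobenius norm (an honest projection onto a matrix variety, not just a quasi-projection). Since $\scr{M}_1(\bd{\mcal{T}})$ has rank at most $r_1$, it is a valid competitor, yielding $\bnm\bd{\hat{\mcal{X}}}^k - \bd{\mcal{W}}^k\bnm_F \leq \bnm\bd{\mcal{T}} - \bd{\mcal{W}}^k\bnm_F$, and the triangle inequality then produces the factor $2$.

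The third bound follows by splitting through $\bd{\mcal{W}}^k$ one more time:
\begin{equation*}
\bnm\bd{\hat{\mcal{X}}}^k - \bd{\mcal{X}}^k\bnm_F \leq \bnm\bd{\hat{\mcal{X}}}^k - \bd{\mcal{W}}^k\bnm_F + \bnm\bd{\mcal{W}}^k - \bd{\mcal{X}}^k\bnm_F,
\end{equation*}
and reusing the two bounds already established (namely $\leq\bnm\bd{\mcal{W}}^k-\bd{\mcal{T}}\bnm_F$ for the first summand via best rank-$r_1$ approximation, and $\leq\sqrt{d}\,\bnm\bd{\mcal{W}}^k-\bd{\mcal{T}}\bnm_F$ for the second via the ST-HOSVD quasi-projection property). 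There is no genuine obstacle here; the only point to be careful about is remembering that $\scr{H}_{r_1}$ in this paper is the exact mode-1 truncated SVD (hence satisfies a true best-approximation inequality with constant $1$), while $\scr{H}^{\text{ST}}_{\mathbf{r}}$ only satisfies the quasi-projection property with constant $\sqrt{d}$ from Proposition \ref{prop: quasipro}.
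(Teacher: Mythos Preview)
Your proposal is correct and matches the paper's proof essentially line for line: each of the three inequalities is obtained by inserting $\bd{\mcal{W}}^k$ via the triangle inequality and then bounding the relevant thresholding error using Proposition~\ref{prop: quasipro} (constant $\sqrt{d}$) for $\scr{H}^{\text{ST}}_{\mathbf{r}}$ and the exact best rank-$r_1$ approximation property (constant $1$) for $\scr{H}_{r_1}$.
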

\begin{proof}
\begin{itemize}
    \item For  $\bnm\bd{\mcal{X}}^k - \bd{\mcal{T}}\bnm_F$, it follows that
    \begin{equation*}
    \begin{aligned}
    \bnm\bd{\mcal{X}}^k - \bd{\mcal{T}}\bnm_F
     & = \bnm\scr{H}^{ST}_{\mathbf{r}}(\bd{\mcal{W}}^k)  - \bd{\mcal{T}}\bnm_F \\
     & \leq \bnm\scr{H}^{ST}_{\mathbf{r}}(\bd{\mcal{W}}^k) - \bd{\mcal{W}}^k\bnm_F + \bnm\bd{\mcal{W}}^k - \bd{\mcal{T}}\bnm_F \\
     & \leq (\sqrt{d}+1) \bnm\bd{\mcal{W}}^k- \bd{\mcal{T}}\bnm_F,
    \end{aligned}
    \end{equation*}
    where the first inequality follows from $\bd{\mcal{X}}^k = \scr{H}^{ST}_{\mathbf{r}}(\bd{\mcal{W}}^k)$ and triangular inequality. The second inequality follows from Proposition \ref{prop: quasipro}.
    \item For $\bnm\bd{\hat{\mcal{X}}}^k - \bd{\mcal{T}}\bnm_F$, it follows that 
    \begin{equation*}
    \begin{aligned}
    \bnm\bd{\hat{\mcal{X}}}^k - \bd{\mcal{T}}\bnm_F
    & = \bnm\scr{H}_{r_1}(\bd{\mcal{W}}^k) - \bd{\mcal{T}}\bnm_F \\
    & \leq \bnm\scr{H}_{r_1}(\bd{\mcal{W}}^k) - \bd{\mcal{W}}^k\bnm_F + \bnm\bd{\mcal{W}}^k - \bd{\mcal{T}}\bnm_F \\
    & \leq 2 \bnm\bd{\mcal{W}}^k- \bd{\mcal{T}}\bnm_F,
    \end{aligned}
    \end{equation*}
     where the first inequality uses the triangular inequality and the second inequality uses the fact that $\bnm\scr{H}_{r_1}(\bd{\mcal{W}}^k) - \bd{\mcal{W}}^k\bnm_F\leq \bnm\bd{\mcal{W}}^k - \bd{\mcal{T}}\bnm_F$, since $\scr{H}_{r_1}(\bd{\mcal{W}}^k$ is the projection of $\bd{\mcal{W}}^k$ on the set consisting of tensor with rank $r_1$ in the first mode. 
    \item For $\bnm\bd{\hat{\mcal{X}}}^k - \bd{\mcal{X}}^k\bnm_F$, it follows that
     \begin{equation*}
    \begin{aligned}
    \bnm\bd{\hat{\mcal{X}}}^k - \bd{\mcal{X}}^k\bnm_F &= \bnm\scr{H}_{r_1}(\bd{\mcal{W}}^k) - \scr{H}^{ST}_{\mathbf{r}}(\bd{\mcal{W}}^k)\bnm_F\\
    &\leq \bnm\scr{H}_{r_1}(\bd{\mcal{W}}^k) - \bd{\mcal{W}}^k\bnm_F + \bnm\scr{H}^{ST}_{\mathbf{r}}(\bd{\mcal{W}}^k) - \bd{\mcal{W}}^k\bnm_F\\
    &\leq \bnm\bd{\mcal{W}}^k - \bd{\mcal{T}}\bnm_F + \sqrt{d} \bnm\bd{\mcal{W}}^k - \bd{\mcal{T}}\bnm_F \\
    & = (\sqrt{d}+1) \bnm\bd{\mcal{W}}^k - \bd{\mcal{T}}\bnm_F,
    \end{aligned}
    \end{equation*}
    the first inequality uses the triangular inequality and the second follows from the Proposition \ref{prop: quasipro}.
\end{itemize}
\end{proof}

\begin{lemma}\label{lemma: RIP bound of inner product}
\cite{wei2016guarantees} Let $\bd{\mcal{Z}}_1, \bd{\mcal{Z}}_2 \in \bb{R}^{n_1 \times n_2 \times \cdots \times n_d}$ be two mode-$1$ low-rank tensors. Suppose $\left\langle \bd{\mcal{Z}}_1, \bd{\mcal{Z}}_2\right\rangle = 0$ and $\rank_1\left(\bd{\mcal{Z}}_1\right)+\rank_1\left(\bd{\mcal{Z}}_2\right) \leq  n_1$, then
\begin{equation*}
    \left|\left\langle\scr{A}\left(\bd{\mcal{Z}}_1\right), \scr{A}\left(\bd{\mcal{Z}}_2\right)\right\rangle\right| \leq R_{\rank_1\left(\bd{\mcal{Z}}_1\right)+\rank\left(\bd{\mcal{Z}}_2\right)}\bnm\bd{\mcal{Z}}_1\bnm_F\bnm\bd{\mcal{Z}}_2\bnm_F,
\end{equation*}

where $R_{\rank_1\left(\bd{\mcal{Z}}_1\right) + \rank\left(\bd{\mcal{Z}}_2\right)}$ is the 1-RIC in Definition \ref{def: RIP property}.
\end{lemma}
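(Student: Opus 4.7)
The plan is to adapt the standard polarization argument that Candès–Recht-type proofs use to turn the two-sided RIP inequality into an inner-product bound, simply replacing matrix rank with first-mode rank and using Definition~\ref{def: RIP property} in place of the matrix RIP. By bilinearity of both the tensor inner product and the operator $\scr{A}$, it suffices to establish the claim under the normalization $\bnm\bd{\mcal{Z}}_1\bnm_F = \bnm\bd{\mcal{Z}}_2\bnm_F = 1$; the general bound then follows by rescaling $\bd{\mcal{Z}}_i \mapsto \bd{\mcal{Z}}_i / \bnm\bd{\mcal{Z}}_i\bnm_F$.

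First I would verify that the first-mode RIP is applicable to the combinations $\bd{\mcal{Z}}_1 \pm \bd{\mcal{Z}}_2$. Because $\scr{M}_1$ is linear, $\scr{M}_1(\bd{\mcal{Z}}_1 \pm \bd{\mcal{Z}}_2) = \scr{M}_1(\bd{\mcal{Z}}_1) \pm \scr{M}_1(\bd{\mcal{Z}}_2)$, so subadditivity of matrix rank gives
\begin{equation*}
\rank\bigl(\scr{M}_1(\bd{\mcal{Z}}_1 \pm \bd{\mcal{Z}}_2)\bigr) \leq \rank_1(\bd{\mcal{Z}}_1) + \rank_1(\bd{\mcal{Z}}_2) \leq n_1,
\end{equation*}
which legitimately places $\bd{\mcal{Z}}_1 \pm \bd{\mcal{Z}}_2$ within the class covered by \eqref{equ: mode-1 RIP condition} with constant $R := R_{\rank_1(\bd{\mcal{Z}}_1)+\rank_1(\bd{\mcal{Z}}_2)}$. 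The orthogonality hypothesis $\langle \bd{\mcal{Z}}_1, \bd{\mcal{Z}}_2 \rangle = 0$ combined with the normalization then yields $\bnm\bd{\mcal{Z}}_1 \pm \bd{\mcal{Z}}_2\bnm_F^2 = 2$.

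Next I would apply the two-sided RIP to both $\bd{\mcal{Z}}_1 + \bd{\mcal{Z}}_2$ and $\bd{\mcal{Z}}_1 - \bd{\mcal{Z}}_2$, obtaining
\begin{equation*}
2(1 - R) \;\leq\; \bnm\scr{A}(\bd{\mcal{Z}}_1 \pm \bd{\mcal{Z}}_2)\bnm_2^2 \;\leq\; 2(1 + R),
\end{equation*}
and then invoke the polarization identity
\begin{equation*}
4\,\langle \scr{A}(\bd{\mcal{Z}}_1), \scr{A}(\bd{\mcal{Z}}_2)\rangle = \bnm\scr{A}(\bd{\mcal{Z}}_1 + \bd{\mcal{Z}}_2)\bnm_2^2 - \bnm\scr{A}(\bd{\mcal{Z}}_1 - \bd{\mcal{Z}}_2)\bnm_2^2.
\end{equation*}
Subtracting the two RIP bounds term-by-term and taking absolute values gives $|\langle \scr{A}(\bd{\mcal{Z}}_1), \scr{A}(\bd{\mcal{Z}}_2)\rangle| \leq \tfrac{1}{4}\bigl[2(1+R) - 2(1-R)\bigr] = R$, and undoing the normalization delivers the claimed inequality.

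There is no serious obstacle here; the only substantive point is the rank-subadditivity check that justifies applying the first-mode RIP at order $\rank_1(\bd{\mcal{Z}}_1) + \rank_1(\bd{\mcal{Z}}_2)$, which is immediate from the linearity of $\scr{M}_1$. Worth noting is that the argument uses only the first-mode structure, so no assumption on the ranks of the other modes of $\bd{\mcal{Z}}_1$ or $\bd{\mcal{Z}}_2$ is needed, and the constant $R$ cannot be sharpened by the orthogonality beyond the usual factor-of-$1$ that the polarization identity provides.
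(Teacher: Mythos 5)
Your proof is correct: the rank-subadditivity check via linearity of $\scr{M}_1$, the computation $\bnm\bd{\mcal{Z}}_1 \pm \bd{\mcal{Z}}_2\bnm_F^2 = 2$ from orthogonality, and the polarization identity combine exactly as you describe, and the rescaling step (with the zero-tensor case trivial) recovers the general bound. The paper gives no proof of its own—it cites \cite{wei2016guarantees}—and your argument is precisely the standard polarization proof used there, transplanted to the first-mode RIP of Definition \ref{def: RIP property}, so it takes essentially the same approach.
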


\begin{lemma}\label{Iniest}
For $\bd{\mcal{X}}^0 = \scr{H}^{\text{ST}}_{\mathbf{r}}(\scr{A}^*\scr{A}(\bd{\mcal{T}}))$ and $\bd{\hat{\mcal{X}}}^0 = \scr{H}_{r_1}(\scr{A}^* \scr{A}(\bd{\mcal{T}}))$, then the following inequality holds:
    \begin{equation}\label{equ: initial substitute point error}
    \bnm\bd{\hat{\mcal{X}}}^0 - \bd{\mcal{T}}\bnm_F < 2 R_{2 r_1}\bnm\bd{\mcal{T}}\bnm_F;
    \end{equation}
    \begin{equation}\label{equ: initial error}
    \bnm\bd{\mcal{X}}^0 - \bd{\mcal{T}}\bnm_F < 2 (\sqrt{d-1}+1) R_{2 r_1}\bnm\bd{\mcal{T}}\bnm_F.
    \end{equation}
\end{lemma}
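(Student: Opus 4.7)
The plan is to prove the two bounds separately. For \eqref{equ: initial substitute point error}, I would exploit the optimality of $\bd{\hat{\mcal{X}}}^0$ as the best mode-$1$ rank-$r_1$ approximation of $\bd{\mcal{M}}:=\scr{A}^*\scr{A}(\bd{\mcal{T}})$ and invoke a non-orthogonal RIP inner-product inequality derived from polarization. For \eqref{equ: initial error}, I would reduce to \eqref{equ: initial substitute point error} via the quasi-projection property of ST-HOSVD restricted to the remaining $d-1$ modes.

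For the first inequality, since $\bd{\mcal{T}}$ has mode-$1$ rank at most $r_1$, the optimality of $\bd{\hat{\mcal{X}}}^0=\scr{H}_{r_1}(\bd{\mcal{M}})$ gives $\bnm\bd{\hat{\mcal{X}}}^0-\bd{\mcal{M}}\bnm_F^2 \leq \bnm\bd{\mcal{T}}-\bd{\mcal{M}}\bnm_F^2$. Expanding the two squared norms, cancelling $\bnm\bd{\mcal{M}}\bnm_F^2$, and rearranging gives
\begin{equation*}
\bnm\bd{\hat{\mcal{X}}}^0-\bd{\mcal{T}}\bnm_F^2 \leq 2\bigl(\langle \scr{A}(\bd{\hat{\mcal{X}}}^0-\bd{\mcal{T}}),\scr{A}(\bd{\mcal{T}})\rangle - \langle \bd{\hat{\mcal{X}}}^0-\bd{\mcal{T}},\bd{\mcal{T}}\rangle\bigr).
\end{equation*}
To bound this without requiring orthogonality between $\bd{\hat{\mcal{X}}}^0-\bd{\mcal{T}}$ and $\bd{\mcal{T}}$, I would apply the polarization identity to $\bd{\mcal{U}}=\bd{\hat{\mcal{X}}}^0-\bd{\mcal{T}}$ and $\bd{\mcal{V}}=\bd{\mcal{T}}$. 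The key observation is that $\bd{\mcal{U}}+\bd{\mcal{V}}=\bd{\hat{\mcal{X}}}^0$ has mode-$1$ rank $\leq r_1$, $\bd{\mcal{U}}-\bd{\mcal{V}}=\bd{\hat{\mcal{X}}}^0-2\bd{\mcal{T}}$ has mode-$1$ rank $\leq 2r_1$, and the same holds for the rescaled combinations $\alpha\bd{\mcal{U}}\pm\bd{\mcal{V}}/\alpha$, so Definition \ref{def: RIP property} with constant $R_{2r_1}$ controls $\bnm\scr{A}(\cdot)\bnm^2$ on each. Optimizing the resulting additive bound over $\alpha>0$ (the standard scaling trick, which leaves both inner products and mode-$1$ ranks invariant) yields the bilinear estimate $|\langle \scr{A}(\bd{\mcal{U}}),\scr{A}(\bd{\mcal{V}})\rangle-\langle \bd{\mcal{U}},\bd{\mcal{V}}\rangle|\leq R_{2r_1}\bnm\bd{\mcal{U}}\bnm_F\bnm\bd{\mcal{V}}\bnm_F$. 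Substituting and cancelling one factor of $\bnm\bd{\hat{\mcal{X}}}^0-\bd{\mcal{T}}\bnm_F$ gives \eqref{equ: initial substitute point error}.

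For the second inequality I would split using the triangle inequality $\bnm\bd{\mcal{X}}^0-\bd{\mcal{T}}\bnm_F \leq \bnm\bd{\mcal{X}}^0-\bd{\hat{\mcal{X}}}^0\bnm_F + \bnm\bd{\hat{\mcal{X}}}^0-\bd{\mcal{T}}\bnm_F$. By the sequential structure of Algorithm \ref{alg: ST-HOSVD}, the mode-$1$ step of ST-HOSVD applied to $\bd{\mcal{M}}$ produces exactly $\bd{\hat{\mcal{X}}}^0$, so $\bd{\mcal{X}}^0=\scr{H}^{\text{ST}}_{\mathbf{r}}(\bd{\mcal{M}})$ coincides with the output of applying the remaining $d-1$ modal truncations to $\bd{\hat{\mcal{X}}}^0$. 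Re-running the proof of Proposition \ref{prop: quasipro} with $d-1$ modes in place of $d$ then gives the quasi-projection bound with constant $\sqrt{d-1}$ against any competitor tensor whose multilinear rank is dominated by $\mathbf{r}$ in modes $2,\dots,d$. Taking $\bd{\mcal{T}}$ itself as that competitor (valid since $\rank_T(\bd{\mcal{T}})=\mathbf{r}$), I obtain $\bnm\bd{\mcal{X}}^0-\bd{\hat{\mcal{X}}}^0\bnm_F \leq \sqrt{d-1}\bnm\bd{\hat{\mcal{X}}}^0-\bd{\mcal{T}}\bnm_F$. Combining with \eqref{equ: initial substitute point error} yields $\bnm\bd{\mcal{X}}^0-\bd{\mcal{T}}\bnm_F \leq 2(\sqrt{d-1}+1)R_{2r_1}\bnm\bd{\mcal{T}}\bnm_F$, which is \eqref{equ: initial error}.

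The main technical obstacle is the inner-product estimate in the first step: Lemma \ref{lemma: RIP bound of inner product} as stated requires $\langle \bd{\mcal{Z}}_1,\bd{\mcal{Z}}_2\rangle=0$, which fails for $\bd{\hat{\mcal{X}}}^0-\bd{\mcal{T}}$ and $\bd{\mcal{T}}$. The bilinear RIP bound must therefore be re-derived from polarization, and it is crucial to apply the scaling trick so that the right-hand side is multiplicatively linear in $\bnm\bd{\mcal{U}}\bnm_F\bnm\bd{\mcal{V}}\bnm_F$ rather than additive in $\bnm\bd{\mcal{U}}\bnm_F^2+\bnm\bd{\mcal{V}}\bnm_F^2$; otherwise the cancellation with the squared left-hand side would fail and the claimed linear-in-$R_{2r_1}$ dependence would be lost.
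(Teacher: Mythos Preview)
Your proposal is correct, and for \eqref{equ: initial error} it matches the paper exactly: both split via the triangle inequality and invoke the $\sqrt{d-1}$ quasi-projection bound $\bnm\bd{\mcal{X}}^0-\bd{\hat{\mcal{X}}}^0\bnm_F\leq\sqrt{d-1}\,\bnm\bd{\hat{\mcal{X}}}^0-\bd{\mcal{T}}\bnm_F$ (the paper cites \cite{vannieuwenhoven2012new}, Theorem~6.4, for this).

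For \eqref{equ: initial substitute point error} the routes genuinely differ. The paper introduces an auxiliary orthogonal matrix $\mbf{Q}\in\bb{R}^{n_1\times 2r_1}$ whose columns span the joint mode-$1$ column space of $\bd{\mcal{T}}$ and $\bd{\hat{\mcal{X}}}^0$, uses a Pythagorean decomposition to pass from the best-approximation inequality to $\bnm\bd{\hat{\mcal{X}}}^0-\bd{\mcal{T}}\bnm_F\leq 2\bnm(\scr{P}_{\mbf{Q}}-\scr{P}_{\mbf{Q}}\scr{A}^*\scr{A}\scr{P}_{\mbf{Q}})(\bd{\mcal{T}})\bnm_F$, and then bounds the operator norm $\bnm\scr{P}_{\mbf{Q}}-\scr{P}_{\mbf{Q}}\scr{A}^*\scr{A}\scr{P}_{\mbf{Q}}\bnm\leq R_{2r_1}$ directly from the RIP. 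You instead expand the optimality inequality algebraically to isolate $\langle\scr{A}(\bd{\mcal{U}}),\scr{A}(\bd{\mcal{V}})\rangle-\langle\bd{\mcal{U}},\bd{\mcal{V}}\rangle$ and derive the bilinear RIP estimate via polarization plus the scaling trick. Both reach the identical constant $2R_{2r_1}$. The paper's subspace argument is slightly cleaner conceptually (no need to verify rank bounds across a one-parameter family of rescalings), while your polarization argument is more self-contained in that it avoids introducing the auxiliary projector $\scr{P}_{\mbf{Q}}$ and the associated operator-norm computation. One small imprecision: after rescaling, $\alpha\bd{\mcal{U}}+\bd{\mcal{V}}/\alpha=\alpha\bd{\hat{\mcal{X}}}^0+(1/\alpha-\alpha)\bd{\mcal{T}}$ has mode-$1$ rank $\leq 2r_1$ rather than $\leq r_1$, but since you work with $R_{2r_1}$ throughout this does not affect the argument.
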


\begin{proof}
As $\bd{\mcal{X}}^0 = \scr{H}^{\text{ST}}_{\mathbf{r}}(\scr{A}^*\scr{A}(\bd{\mcal{T}}))$ and $\bd{\hat{\mcal{X}}}^0 = \scr{H}_{r_1}(\scr{A}^* \scr{A}(\bd{\mcal{T}}))$ in Algorithm \ref{alg: SMQRGD}, we have  
\begin{equation}\label{lm56eq1}
    \bnm\bd{\mcal{X}}^0 - \bd{\mcal{T}} \bnm_F\leq\bnm\bd{\mcal{X}}^0 -\bd{\hat{\mcal{X}}}^0 \bnm_F + \bnm\bd{\hat{\mcal{X}}}^0 - \bd{\mcal{T}}\bnm_F \leq (\sqrt{d-1}+1)\bnm\bd{\hat{\mcal{X}}}^0 - \bd{\mcal{T}}\bnm_F,
\end{equation}
where the second inequality uses $\bnm\bd{\mcal{X}}^0 -\bd{\hat{\mcal{X}}}^0\bnm_F\leq \sqrt{d-1} \bnm\bd{\hat{\mcal{X}}}^0 - \bd{\mcal{T}}\bnm_F$ (\cite{vannieuwenhoven2012new}, Theorem 6.4). Let $\mbf{Q} \in \bb{R}^{n_1\times 2 r_1}$ be the orthogonal matrix whose column spans the column space of $\scr{M}_1(\bd{\mcal{T}})$ and $\scr{M}_1(\bd{\hat{\mcal{X}}}^0)$ with $\mbf{Q}^{\perp}$ be its orthogonal completion matrix. Denote $\scr{P}_{\mbf{Q}}(\cdot)$ and $\scr{P}_{\mbf{Q}^{\perp}}(\cdot) $ as the projection operators on column space of  $\mbf{Q}$ and $\mbf{Q}^{\perp}$, then we have
$\bd{\hat{\mcal{X}}}^0 = \scr{P}_{\mbf{Q}}(\bd{\hat{\mcal{X}}}^0)$, $\scr{P}_{\mbf{Q}}(\bd{\mcal{T}}) = \bd{\mcal{T}}$, and the following equations hold
\begin{equation}\label{orth}
\bnm\bd{\hat{\mcal{X}}}^0 - \scr{A}^*\scr{A}(\bd{\mcal{T}})\bnm_F^2 = \bnm \bd{\hat{\mcal{X}}}^0 - \scr{P}_\mbf{Q}\scr{A}^*\scr{A}(\bd{\mcal{T}})\bnm_F^2 + \bnm\scr{P}_{\mbf{Q}^{\perp}}\scr{A}^*\scr{A}(\bd{\mcal{T}})\bnm_F^2;
\end{equation}
and 
\begin{equation*}
\bnm \bd{\mcal{T}} - \scr{A}^*\scr{A}(\bd{\mcal{T}})\bnm_F^2 = \bnm \bd{\mcal{T}} - \scr{P}_\mbf{Q}\scr{A}^*\scr{A}(\bd{\mcal{T}})\bnm_F^2 + \bnm\scr{P}_{\mbf{Q}^{\perp}}\scr{A}^*\scr{A}(\bd{\mcal{T}})\bnm_F^2.
\end{equation*}
Since $\bd{\hat{\mcal{X}}}^0$ is the projection of $\scr{A}^*\scr{A}(\bd{\mcal{T}})$, we have  $\bnm \bd{\hat{\mcal{X}}}^0 - \scr{A}^*\scr{A}(\bd{\mcal{T}})\bnm_F\leq \bnm \bd{\mcal{T}} - \scr{A}^*\scr{A}(\bd{\mcal{T}})\bnm_F$, then from \eqref{orth} it follows that 
\begin{equation*}
    \bnm \bd{\hat{\mcal{X}}}^0 - \scr{P}_\mbf{Q}\scr{A}^*\scr{A}(\bd{\mcal{T}})\bnm_F\leq \bnm \bd{\mcal{T}} - \scr{P}_\mbf{Q}\scr{A}^*\scr{A}(\bd{\mcal{T}})\bnm_F.
\end{equation*}
Therefore,
\begin{equation*}
    \begin{aligned}
\bnm\bd{\hat{\mcal{X}}}^0 - \bd{\mcal{T}}\bnm_F&\leq \bnm \bd{\hat{\mcal{X}}}^0 - \scr{P}_\mbf{Q}\scr{A}^*\scr{A}(\bd{\mcal{T}})\bnm_F + \bnm \bd{\mcal{T}} - \scr{P}_\mbf{Q}\scr{A}^*\scr{A}(\bd{\mcal{T}})\bnm_F\\
&\leq 2 \bnm \bd{\mcal{T}} - \scr{P}_\mbf{Q}\scr{A}^*\scr{A}(\bd{\mcal{T}})\bnm_F\\
& = 2\bnm(\scr{P}_\mbf{Q} - \scr{P}_\mbf{Q}\scr{A}^*\scr{A}\scr{P}_\mbf{Q}) (\bd{\mcal{T}}) \bnm_F,\\
\end{aligned}
\end{equation*}
where the equality follows from $\bd{\mcal{T}}=\scr{P}_\mbf{Q}(\bd{\mcal{T}})$.\\
To estimate $\scr{P}_\mbf{Q} - \scr{P}_\mbf{Q}\scr{A}^*\scr{A}\scr{P}_\mbf{Q}$, since $\rank_1(\scr{P}_\mbf{Q}(\bd{\mcal{Z}}))\leq 2 r_1$ for any tensor $\bd{\mcal{Z}}$, we have
\begin{equation*}
\begin{aligned}
\bnm \scr{P}_\mbf{Q} - \scr{P}_\mbf{Q}\scr{A}^*\scr{A}\scr{P}_\mbf{Q}\bnm & = \sup_{\|\bd{\mcal{Z}}\|_F = 1}\left|\langle  (\scr{P}_\mbf{Q} - \scr{P}_\mbf{Q}\scr{A}^*\scr{A}\scr{P}_\mbf{Q})(\bd{\mcal{Z}}), \bd{\mcal{Z}} \rangle \right| \\
& = \sup_{\|\bd{\mcal{Z}}\|_F = 1} \left|\bnm\scr{P}_\mbf{Q}(\bd{\mcal{Z}})\bnm_F^2 - \bnm\scr{A}\scr{P}_\mbf{Q} (\bd{\mcal{Z}}) \bnm_F^2 \right|\\
&\leq \sup_{\|\bd{\mcal{Z}}\|_F = 1} R_{2r_1} \bnm\scr{P}_\mbf{Q}(\bd{\mcal{Z}})\bnm_F^2\\
&\leq  R_{2r_1},
\end{aligned}
\end{equation*}
where the first inequality uses the $1$-RIP condition of operator $\scr{A}$.  Thus, $\bnm\bd{\hat{\mcal{X}}}^0 - \bd{\mcal{T}}\bnm_F$ can be bounded as the follows
\begin{equation}\label{lm56eq2}
    \bnm\bd{\hat{\mcal{X}}}^0 - \bd{\mcal{T}}\bnm_F\leq 2 R_{2 r_1}\bnm\bd{\mcal{T}}\bnm_F.
\end{equation}
Combine \eqref{lm56eq1} and \eqref{lm56eq2}, we have
\begin{equation*}
\bnm\bd{\mcal{X}}^0 - \bd{\mcal{T}}\bnm_F\leq 2 (\sqrt{d-1}+1) R_{2 r_1}\bnm\bd{\mcal{T}}\bnm_F.
\end{equation*}
\end{proof}

Before proving Theorem \ref{thm: alpha =1} and \ref{thm: SMQRGD}, we first give the one-step estimate of the SM-QRGD algorithm. \\
Recall the definition $\bd{\mcal{W}}^{k+1} = \scr{P}_{\hat{\bb{T}}^k} (\bd{\mcal{X}}^k - \alpha_k \scr{A}^*\scr{A}(\bd{\mcal{X}}^k - \bd{\mcal{T}}))$ and by Lemma \ref{lemma: substitution error}, one gets
\begin{equation*}
    \bnm\bd{\mcal{X}}^{k+1} - \bd{\mcal{T}}\bnm_F\leq (\sqrt{d}+1) \bnm\bd{\mcal{W}}^{k+1}- \bd{\mcal{T}}\bnm_F.
\end{equation*}
\noindent Therefore, it turn to estimate $\bnm\bd{\mcal{W}}^{k+1}- \bd{\mcal{T}}\bnm_F$. \\
According to the definition of $\bd{\mcal{W}}^{k+1}$, 
\begin{equation*}
\begin{aligned}
\bnm\bd{\mcal{W}}^{k+1} - \bd{\mcal{T}}\bnm_F &= \bnm\bd{\mcal{X}}^k - \alpha_k \scr{P}_{\hat{\bb{T}}^k} (\scr{A}^*\scr{A}(\bd{\mcal{X}}^k - \bd{\mcal{T}})) - \bd{\mcal{T}}\bnm_F\\
&\leq \underbrace{\bnm(\scr{P}_{\hat{\bb{T}}^k} - \alpha_k \scr{P}_{\hat{\bb{T}}^k}\scr{A}^* \scr{A}\scr{P}_{\hat{\bb{T}}^k})(\bd{\mcal{X}}^k-\bd{\mcal{T}})\bnm_F}_{I_{1}} + \underbrace{\bnm(\scr{I} -  \scr{P}_{\hat{\bb{T}}^k})(\bd{\mcal{T}})\bnm_F}_{I_{2}}\\
&\quad + \underbrace{\alpha_k  \bnm\scr{P}_{\hat{\bb{T}}^k}\scr{A}^* \scr{A}(\scr{I} - \scr{P}_{\hat{\bb{T}}^k})(\bd{\mcal{T}})\bnm_F}_{I_{3}}. \\
\end{aligned}
\end{equation*}
where the first equality uses Lemma \ref{lemma: lmPSTS}, i.e., $\scr{P}_{\hat{\bb{T}}^k}(\bd{\mcal{X}}^k) = \bd{\mcal{X}}^k$. \\
We now estimate the $I_1, I_2$ and $I_3$, separately.
\begin{itemize}
    \item For $I_{1}$, it follows that
    \begin{equation}\label{thmlmeq1}
    \begin{aligned}
    \bnm\scr{P}_{\hat{\bb{T}}^k} -   \scr{P}_{\hat{\bb{T}}^k}\scr{A}^* \scr{A}  \scr{P}_{\hat{\bb{T}}^k}\bnm &=  \sup_{\|\bd{\mcal{Z}}\|_F = 1} \left|\langle (\scr{P}_{\hat{\bb{T}}^k} -   \scr{P}_{\hat{\bb{T}}^k}\scr{A}^* \scr{A}  \scr{P}_{\hat{\bb{T}}^k})(\bd{\mcal{Z}}),\bd{\mcal{Z}}\rangle\right|\\
    &\leq \sup_{\|\bd{\mcal{Z}}\|_F = 1}\left|\bnm \scr{P}_{\hat{\bb{T}}^k}(\bd{\mcal{Z}})\bnm_F^2 - \bnm\scr{A}  \scr{P}_{\hat{\bb{T}}^k}(\bd{\mcal{Z}})\bnm_F^2\right|\\
    &\leq R_{2 r_1},
    \end{aligned}
    \end{equation}
    where the last inequality uses $1$-RIP condition of the operator $\scr{A}$.\\
    And 
    \begin{equation}\label{thmlmeq2}
    \begin{aligned}
    \bnm\scr{P}_{\hat{\bb{T}}^k}\scr{A}^* \scr{A} \scr{P}_{\hat{\bb{T}}^k}\bnm &= \sup_{\|\bd{\mcal{Z}}\|_F = 1}\left|\langle\scr{P}_{\hat{\bb{T}}^k}\scr{A}^* \scr{A} \scr{P}_{\hat{\bb{T}}^k}(\bd{\mcal{Z}}), \bd{\mcal{Z}}\rangle\right|\\
    &= \sup_{\|\bd{\mcal{Z}}\|_F = 1}\left|\langle \scr{A} \scr{P}_{\hat{\bb{T}}^k}(\bd{\mcal{Z}}), \scr{A} \scr{P}_{\hat{\bb{T}}^k}(\bd{\mcal{Z}})\rangle\right|\\
    &\leq \sup_{\|\bd{\mcal{Z}}\|_F = 1}(1+ R_{2 r_1})\bnm\scr{P}_{\hat{\bb{T}}^k}(\bd{\mcal{Z}})\bnm_F^2\leq 1 + R_{2 r_1}.
    \end{aligned}
    \end{equation}
    Therefore, $I_1$ can be bounded by
    \begin{equation*}
    \begin{aligned}
    I_{1}&\leq \bnm(\scr{P}_{\hat{\bb{T}}^k} - \scr{P}_{\hat{\bb{T}}^k}\scr{A}^* \scr{A}\scr{P}_{\hat{\bb{T}}^k})(\bd{\mcal{X}}^k - \bd{\mcal{T}})\bnm_F + |1-\alpha_k|\cdot\\
    &\quad \bnm \scr{P}_{\hat{\bb{T}}^k}\scr{A}^* \scr{A}\scr{P}_{\hat{\bb{T}}^k}(\bd{\mcal{X}}^k - \bd{\mcal{T}}) \bnm_F\\
    &\leq \left(R_{2 r_1} + |1-\alpha_k| (1+R_{2 r_1})\right)\bnm\bd{\mcal{X}}^k - \bd{\mcal{T}}\bnm_F\\
    &\leq (\sqrt{d}+1)\left(R_{2 r_1} + |1-\alpha_k| (1+R_{2 r_1})\right)\bnm\bd{\mcal{W}}^k - \bd{\mcal{T}}\bnm_F,
    \end{aligned}
    \end{equation*}
    where the last inequality follows from Lemma \ref{lemma: substitution error}.
    \item For $I_{2}$,
    by Lemma \ref{cor: tsp error} and Lemma \ref{lemma: substitution error}, one gets
     \begin{equation*}
    \begin{aligned}
    I_{2}& = \bnm(\scr{I} -  \scr{P}_{\hat{\bb{T}}^k})(\bd{\mcal{T}})\bnm_F \\
    &\leq \frac{1}{\sigma_{r_1 }(\scr{M}_1(\bd{\mcal{T}}))} \bnm\bd{\hat{\mcal{X}}}^k - \bd{\mcal{T}}\bnm_F^2\\
    &\leq \frac{4}{\sigma_{r_1 }(\scr{M}_1(\bd{\mcal{T}}))}\bnm\bd{\mcal{W}}^k - \bd{\mcal{T}}\bnm_F^2.
    \end{aligned}
    \end{equation*}

    \item For $I_{3}$,
    \begin{equation}\label{thmlmeq3}
    \begin{aligned}
    I_{3} &=  \alpha_k \sup_{\|\bd{\mcal{Z}}\|_F = 1}\left|\langle\scr{P}_{\hat{\bb{T}}^k}\scr{A}^* \scr{A} (\scr{I} - \scr{P}_{\hat{\bb{T}}^k})( \bd{\mcal{T}}), \bd{\mcal{Z}}\rangle\right|\\
    &= \alpha_k \sup_{\|\bd{\mcal{Z}}\|_F = 1} \left|\langle\scr{A} (\scr{I} - \scr{P}_{\hat{\bb{T}}^k})( \bd{\mcal{T}}),\scr{A} \scr{P}_{\hat{\bb{T}}^k}(\bd{\mcal{Z}})\rangle \right|\\
    &\leq \alpha_k \sup_{\|\bd{\mcal{Z}}\|_F = 1} R_{3 r_1}  \bnm(\scr{I} - \scr{P}_{\hat{\bb{T}}^k})(\bd{\mcal{T}})\bnm_F \bnm\scr{P}_{\hat{\bb{T}}^k}(\bd{\mcal{Z}})\bnm_F\\
    &\leq \alpha_k R_{3 r_1} \bnm \bd{\hat{\mcal{X}}}^k- \bd{\mcal{T}}\bnm_F\\
    & \leq 2 \alpha_k R_{3 r_1}\bnm \bd{\mcal{W}}^k- \bd{\mcal{T}}\bnm_F,
    \end{aligned}
    \end{equation}
    where the first inequality follows from Lemma \ref{lemma: RIP bound of inner product} with the fact that \\ 
    $\rank (\scr{M}_1((\scr{I} - \scr{P}_{\hat{\bb{T}}^k})( \bd{\mcal{T}})))\leq r_1$ and $\rank (\scr{M}_1(\scr{P}_{\hat{\bb{T}}^k}(\bd{\mcal{Z}})))\leq 2 r_1$. The second inequality follows the fact that $\bnm\scr{P}_{\hat{\bb{T}}^k}(\bd{\mcal{Z}})\bnm_F \leq \bnm\bd{\mcal{Z}}\bnm_F\leq 1$ and $\bnm(\scr{I} - \scr{P}_{\hat{\bb{T}}^k})(\bd{\mcal{T}})\bnm_F\leq \bnm \bd{\hat{\mcal{X}}}^k- \bd{\mcal{T}}\bnm_F$. The last inequality follows from Lemma \ref{lemma: substitution error}.
    
\end{itemize}
Finally, we have the following estimation
\begin{equation}
    \begin{aligned}\label{equ: sum up error estimation}
    \bnm\bd{\mcal{W}}^{k+1}- \bd{\mcal{T}}\bnm_F
    &\leq \Big((\sqrt{d}+1)\big(R_{2 r_1} + |1-\alpha_k| (1+R_{2 r_1})\big) + 2 \alpha_k R_{3r_1} \Big) \bnm\bd{\mcal{W}}^k - \bd{\mcal{T}}\bnm_F\\
    &\quad + \frac{4}{\sigma_{r_1}(\scr{M}_1(\bd{\mcal{T}}))} \bnm\bd{\mcal{W}}^k - \bd{\mcal{T}}\bnm_F^2.
    \end{aligned}
\end{equation}

\subsubsection{Proof of Theorem \ref{thm: alpha =1}}


\begin{proof}
Observing  $R_{2 r_1}\leq R_{3 r_1}$, $\alpha_k= 1$, and by \eqref{equ: sum up error estimation}, one has

\begin{equation}\label{equ: linear estimation}
    \bnm\bd{\mcal{W}}^{k+1} - \bd{\mcal{T}}\bnm_F\leq \left(\frac{4}{\sigma_{r_1 }(\scr{M}_1(\bd{\mcal{T}}))}\bnm\bd{\mcal{W}}^k - \bd{\mcal{T}}\bnm_F + (\sqrt{d}+3)R_{3 r_1}  \right)\bnm\bd{\mcal{W}}^k - \bd{\mcal{T}}\bnm_F.
\end{equation}

\noindent To ensure linear convergence, it suffices to let the coefficient of the right side of the above inequality strictly less than $1$, i.e.,
\begin{equation}\label{equ: constant stepsize, linear convergent 1}
 \left(\frac{4}{\sigma_{r_1 }(\scr{M}_1(\bd{\mcal{T}}))}\bnm\bd{\mcal{W}}^k - \bd{\mcal{T}}\bnm_F + (\sqrt{d}+3)R_{3 r_1}  \right) < 1.
\end{equation}
It is easy to verify that if \eqref{equ: constant stepsize, linear convergent 1} holds for $k = 1$, then it will hold for all $k\geq 2$ since \eqref{equ: linear estimation} yields $\bnm\bd{\mcal{W}}^{k+1} - \bd{\mcal{T}}\bnm_F\leq \bnm\bd{\mcal{W}}^k - \bd{\mcal{T}}\bnm_F$. \\
For $k = 1$,
\begin{equation}\label{them1ineq}
\begin{aligned}
\bnm\bd{\mcal{W}}^1 - \bd{\mcal{T}}\bnm_F &= \bnm\bd{\mcal{X}}^0 -  \scr{P}_{\hat{\bb{T}}^0}(\scr{A}^* \scr{A} (\bd{\mcal{X}}^0 - \bd{\mcal{T}})) - \bd{\mcal{T}} \bnm_F\\
&\leq {\bnm\bd{\mcal{X}}^0 - \bd{\mcal{T}}\bnm_F} +  \bnm\scr{P}_{\hat{\bb{T}}^0}\scr{A}^* \scr{A}\scr{P}_{\hat{\bb{T}}^0} (\bd{\mcal{X}}^0 - \bd{\mcal{T}})\bnm_F\\
&\quad +  \bnm\scr{P}_{\hat{\bb{T}}^0}\scr{A}^* \scr{A}(\scr{I} - \scr{P}_{\hat{\bb{T}}^0}) (\bd{\mcal{T}})\bnm_F\\
&\leq  \bnm\bd{\mcal{X}}^0 - \bd{\mcal{T}}\bnm_F +  (R_{2 r_1}+1) \bnm\bd{\mcal{X}}^0 - \bd{\mcal{T}}\bnm_F + R_{3 r_1} \bnm\bd{\hat{\mcal{X}}}^0 - \bd{\mcal{T}}\bnm_F \\
&< 2 R_{3 r_1}\left((\sqrt{d-1}+1) (R_{3 r_1}+2) + R_{3 r_1}\right)\bnm\bd{\mcal{T}}\bnm_F. 
\end{aligned}
\end{equation}
the first inequality uses the triangular inequality and Lemma \ref{lemma: lmPSTS}. The second inequality uses the estimates in \eqref{thmlmeq2} and \eqref{thmlmeq3}. The last inequality uses the Lemma \ref{Iniest} and fact that $R_{2 r_1} < R_{3 r_1}$.\\
Plug \eqref{them1ineq} into \eqref{equ: constant stepsize, linear convergent 1}, observing $\bnm \bd{\mcal{T}}\bnm_F\leq \sqrt{r_1} \sigma_1(\scr{M}_1(\bd{\mcal{T}}))$, 
it is sufficient to require
\begin{equation}\label{equ: final control constant}
     \gamma_1 :=  R_{3 r_1}\left(8\sqrt{r_1}\kappa_1\left((\sqrt{d-1}+1) (R_{3 r_1}+2) + R_{3 r_1}\right) + \sqrt{d} + 3\right) < 1,
\end{equation}
or equivalently 
\begin{equation}\label{equ: bd1}
\begin{aligned}
R_{3 r_1}
& <\frac{1}{8\sqrt{r_1}\kappa_1\left((\sqrt{d-1}+1) (R_{3 r_1}+2) + R_{3 r_1}\right) + \sqrt{d} + 3} \\
& =:\frac{1}{g_1(R_{3 r_1})},
\end{aligned}
\end{equation}
It is observed that $g_1(b)$ is continuous, increases in $(0, 1)$ and $g_1(0)$ is bounded, to ensure \eqref{equ: bd1} hold, it suffices to require $R_{3 r_1}< \min \left(b, \frac{1}{g_1(b)}\right)$, for any $b\in (0, 1)$, since
\begin{equation}\label{equ: interpolation estimation}
    R_{3 r_1}<\frac{1}{g_1(b)}< \frac{1}{g_1(R_{3 r_1})},
\end{equation}
where the second inequality follows from $R_{3 r_1}< b$ and $g_1$ is strictly increasing. \\
Take $b = \frac{1}{2}$, we get the sufficient condition in Theorem \ref{thm: alpha =1}
\begin{equation*}
R_{3 r_1} < \min \left(\frac{1}{2}, \frac{1}{ (20\sqrt{d-1} + 24)\sqrt{r_1}\kappa_1 + \sqrt{d} + 3} \right).
\end{equation*}
The linear convergence is obtained as the following
\begin{equation*}
\begin{aligned}
\bnm\bd{\mcal{W}}^{k+1} -\bd{\mcal{T}}\bnm_F
& \leq \gamma_1^k\bnm\bd{\mcal{W}}^1 - \bd{\mcal{T}}\bnm_F\\
& \leq  2 R_{3 r_1}\left((\sqrt{d-1}+1) (R_{3 r_1}+2) + R_{3 r_1}\right) \gamma_1^{k} \bnm\bd{\mcal{T}}\bnm_F \\
& \leq (\frac{5}{2}\sqrt{d-1}+3)\gamma_1^{k}\bnm\bd{\mcal{T}}\bnm_F.
\end{aligned}
\end{equation*}
and
\begin{equation*}
    \begin{aligned}
    \bnm\bd{\mcal{X}}^{k+1} - \bd{\mcal{T}}\bnm_F&\leq  (\sqrt{d}+1) \bnm\bd{\mcal{W}}^{k+1}- \bd{\mcal{T}}\bnm_F\\
    &\leq (\sqrt{d}+1)(\frac{5}{2} \sqrt{d-1} + 3) \gamma_1^{k}\bnm\bd{\mcal{T}}\bnm_F.
\end{aligned}
\end{equation*}
This completes the proof.
\end{proof}
\subsubsection{Proof of Theorem \ref{thm: SMQRGD}}
\begin{proof}
Likewise, for normalized step size \eqref{equ: step size choice}, one has
\begin{equation}\label{equ: stepsize estimation}
\frac{1}{1+R_{2 r_1}} \leq \alpha_k=\frac{\bnm\scr{P}_{\hat{\bb{T}}^k}(\bd{\mcal{G}}^k)\bnm_F^2}{\bnm\scr{A}(\scr{P}_{\hat{\bb{T}}^k}(\bd{\mcal{G}}^k))\bnm_F^2} \leq \frac{1}{1-R_{2 r_1}},
\end{equation}
which is equivalent to  
\begin{equation*}
    \left|\alpha_k-1\right| \leq \frac{R_{2 r_1}}{1 - R_{2 r_1}}.
\end{equation*}
Observing $R_{2 r_1}\leq R_{3 r_1}$, plugging the estimation of $\alpha_k$ into \eqref{equ: sum up error estimation} yields 
\begin{equation}\label{equ: linear convergent}
\bnm\bd{\mcal{W}}^{k+1} - \bd{\mcal{T}}\bnm_F\leq \left(\frac{4}{\sigma_{r_1 }(\scr{M}_1(\bd{\mcal{T}}))} \bnm\bd{\mcal{W}}^k - \bd{\mcal{T}}\bnm_F + (\sqrt{d}+2)\frac{2R_{3 r_1}}{1- R_{3 r_1}}\right)\bnm\bd{\mcal{W}}^k - \bd{\mcal{T}}\bnm_F.
\end{equation}
Analogously, it is sufficient to require the following inequality to obtain the linear convergence 
\begin{equation}\label{equ: linear convergent 1}
\left(\frac{4}{\sigma_{r_1 }(\scr{M}_1(\bd{\mcal{T}}))} \bnm\bd{\mcal{W}}^k - \bd{\mcal{T}}\bnm_F + (\sqrt{d}+2)\frac{2R_{3 r_1}}{1- R_{3 r_1}}\right)< 1.
\end{equation}
For $\bnm\bd{\mcal{W}}^1 - \bd{\mcal{T}}\bnm_F$,
\begin{equation*}
    \begin{aligned}
\bnm\bd{\mcal{W}}^1 - \bd{\mcal{T}}\bnm_F &= \bnm\bd{\mcal{X}}^0 - \alpha_0 \scr{P}_{\hat{\bb{T}}^0}(\scr{A}^* \scr{A} (\bd{\mcal{X}}^0 - \bd{\mcal{T}})) - \bd{\mcal{T}} \bnm_F\\
&\leq {\bnm\bd{\mcal{X}}^0 - \bd{\mcal{T}}\bnm_F} +  {\alpha_0 \bnm\scr{P}_{\hat{\bb{T}}^0}\scr{A}^* \scr{A}\scr{P}_{\hat{\bb{T}}^0} (\bd{\mcal{X}}^0 - \bd{\mcal{T}})\bnm_F} \\
&\quad + {\alpha_0 \bnm\scr{P}_{\hat{\bb{T}}^0}\scr{A}^* \scr{A}(\scr{I} - \scr{P}_{\hat{\bb{T}}^0}) (\bd{\mcal{T}})\bnm_F}\\
&\leq  {\bnm\bd{\mcal{X}}^0 - \bd{\mcal{T}}\bnm_F} + {\alpha_0 (R_{2 r_1}+1) \bnm\bd{\mcal{X}}^0 - \bd{\mcal{T}}\bnm_F} + \alpha_0 R_{3 r_1} \bnm\bd{\hat{\mcal{X}}}^0 - \bd{\mcal{T}}\bnm_F\\
&< \frac{2}{1-R_{3 r_1}} \bnm\bd{\mcal{X}}^0 - \bd{\mcal{T}}\bnm_F + \frac{R_{3 r_1}}{1 - R_{3 r_1}} \bnm\bd{\hat{\mcal{X}}}^0 - \bd{\mcal{T}}\bnm_F \\
&< \frac{2R_{3 r_1}}{1-R_{3 r_1}}\left( 2 (\sqrt{d-1}+1) + R_{3 r_1}\right)\bnm\bd{\mcal{T}}\bnm_F.
\end{aligned}
\end{equation*}
where the second to the last inequality uses the (\ref{equ: stepsize estimation}) and the last inequality uses Lemma \ref{Iniest}. \\
Plugging the above initialization error into \eqref{equ: linear convergent 1} for $k=1$ yields 
\begin{equation}\label{equ: final contral}
    \gamma_2 := \frac{2R_{3 r_1}}{1-R_{3 r_1}}\left(4 \sqrt{r_1}\kappa_1 \left(2 (\sqrt{d-1}+1) + R_{3 r_1}\right) + \sqrt{d}+2 \right)<1,
\end{equation}
or equivalently 
\begin{equation*}
    \begin{aligned}
    R_{3 r_1}&< \frac{1}{\frac{2}{1-R_{3 r_1}}\left(4 \sqrt{r_1}\kappa_1 \left(2 (\sqrt{d-1}+1) + R_{3 r_1}\right) + \sqrt{d}+2 \right)}\\
    & := \frac{1}{g_2(R_{3 r_1})}.
    \end{aligned}
\end{equation*}
Here $g_2(b)$ is continuous and increases in $(0,1)$, $g_2(0)$ is bounded. So if $R_{3 r_1}< \min \left(b, \frac{1}{g_2(b)}\right)$, for any $b\in (0, 1)$, then $ R_{3 r_1}<\frac{1}{g_2(b)}< \frac{1}{g_2(R_{3 r_1})}$ and \eqref{equ: final contral} holds. Taking $b = \frac{1}{2}$, we get the sufficient condition in Theorem \ref{thm: SMQRGD}:
\begin{equation*}
R_{3 r_1} < \min \left(\frac{1}{2}, \frac{1}{\left(32 \sqrt{d-1} + 40\right)\sqrt{r_1}\kappa_1  + 4 \sqrt{d}+ 8} \right).
\end{equation*}
Then, we have 
\begin{equation*}
\begin{aligned}
\bnm\bd{\mcal{W}}^{k+1}- \bd{\mcal{T}}\bnm_F
& \leq \gamma_2^{k} \bnm\bd{\mcal{W}}^1- \bd{\mcal{T}}\bnm_F \\
& \leq 2 R_{3 r_1}\frac{ 2 (\sqrt{d-1} + 1) + R_{3 r_1}}{1-R_{3 r_1}} \gamma_2^{k}\bnm\bd{\mcal{T}}\bnm_F \\ 
& \leq (8 \sqrt{d-1}+10)\gamma_2^{k}\bnm\bd{\mcal{T}}\bnm_F,
\end{aligned}
\end{equation*}
and 
\begin{equation*}
\begin{aligned}
\bnm\bd{\mcal{X}}^{k+1} - \bd{\mcal{T}}\bnm_F
& \leq  (\sqrt{d}+1)\bnm\bd{\mcal{W}}^{k+1}- \bd{\mcal{T}}\bnm_F \\
& \leq (\sqrt{d}+1)(8 \sqrt{d-1}+10)\gamma_2^{k} \bnm\bd{\mcal{T}}\bnm_F.
\end{aligned}
\end{equation*}
This completes the proof.
\end{proof}

\section*{Acknowledgments}
The authors would like to thank the anonymous referees very much for their careful reading and valuable comments, which significantly improved the quality of this manuscript. This work was supported by Shanghai Municipal Science and Technology Major Project (2021SHZDZX0102) and NSFC (No.12090024). We also thank the Student Innovation Center at Shanghai Jiao Tong University for providing the computing services.

\bibliographystyle{siamplain}
\bibliography{references}
\end{document}